\documentclass[11pt,reqno,twoside]{amsart}
\usepackage{graphicx}
\usepackage{epstopdf}
\usepackage[asymmetric,top=3.5cm,bottom=4.3cm,left=3.1cm,right=3.1cm]{geometry}
\geometry{a4paper}
\usepackage{bm}

\usepackage{booktabs} 
\usepackage{array} 
\usepackage{paralist} 
\usepackage{verbatim} 
\usepackage{subfig} 
\usepackage{tabularx}
\usepackage{amsmath,amsfonts,amsthm,mathrsfs,amssymb,cite}
\usepackage[usenames]{color}

\usepackage{stmaryrd}
\usepackage{amsmath,amssymb,amsfonts}
\usepackage{times}
\usepackage[T1]{fontenc}
\usepackage{graphicx}
\usepackage[english]{babel}
\usepackage{amsmath}
\usepackage{amsthm}
\usepackage{amssymb}
\usepackage{enumerate}
\usepackage{xspace}
\usepackage{euscript}
\usepackage{graphicx}
\usepackage{amscd}
\usepackage{epsfig}
\usepackage{tabularx}
\usepackage{color}

\newtheorem{thm}{Theorem}[section]

\newtheorem{lem}{Lemma}[section]

\theoremstyle{definition}

\theoremstyle{remark}

\newtheorem{rem}{Remark}[section]
\numberwithin{equation}{section}

\title[Isotropic cloaking and transmission eigenvalues]{On isotropic cloaking and interior transmission eigenvalue problems}

\author{Xia Ji}
\address{LSEC, Institute of Computational Mathematics, Chinese Academy of Sciences, Beijing, 100190, P.~R.~China.}
\email{jixia@lsec.cc.ac.cn}

\author{Hongyu Liu}
\address{Department of Mathematics, Hong Kong Baptist University, Kowloon Tong, Hong Kong SAR, and HKBU Institute of Research and Continuing Education, Virtual University Park, Shenzhen, P. R. China. }
\email{hongyu.liuip@gmail.com}

\date{April 19, 2016}                                           

\begin{document}

\begin{abstract}

This paper is concerned with the invisibility cloaking in acoustic wave scattering from a new perspective. We are especially interested in achieving the invisibility cloaking by completely regular and isotropic mediums. It is shown that an interior transmission eigenvalue problem arises in our study, which is the one considered theoretically in \cite{CCH}. Based on such an observation, we propose a cloaking scheme that takes a three-layer structure including a cloaked region, a lossy layer and a cloaking shell. The target medium in the cloaked region can be arbitrary but regular, whereas the mediums in the lossy layer and the cloaking shell are both regular and isotropic. We establish that if a certain non-transparency condition is satisfied, then there exists an infinite set of incident waves such that the cloaking device is nearly-invisible under the corresponding wave interrogation. The set of waves is generated from the Herglotz-approximation of the associated interior transmission eigenfunctions. We provide both theoretical and numerical justifications.

\medskip

\noindent{\bf Keywords}. Acoustic wave scattering, invisibility cloaking, isotropic and regular, interior transmission eigenvalues \smallskip

\noindent{\bf Mathematics Subject Classification (2010)}:  Primary\ \ 35J57, 35P25, 35Q60, 78A46;\ Secondary\ \ 35R30, 65N25, 65N30

\end{abstract}

\maketitle


\section{Introduction}

\subsection{Mathematical formulation and motivation}

In this paper, we are concerned with the scattering of acoustic waves in the time-harmonic regime. Let $\Omega\subset\mathbb{R}^N$, $N=2, 3$, be a bounded Lipschitz domain with a connected complement $\mathbb{R}^N\backslash\overline{\Omega}$. Let $\sigma(x)=(\sigma^{jl}(x))_{j,l=1}^N$, $x=(x^j)_{j=1}^N\in\mathbb{R}^N$. It is assumed that $\sigma^{jl}=\sigma^{lj}$, $1\leq j, l\leq N$, are real-valued and measurable functions such that
\begin{equation}\label{eq:reg1}
 \exists\ \lambda \in (0, 1),\  \forall\xi=(\xi_j)_{j=1}^N\in\mathbb{R}^N,\  \forall x\in\mathbb{R}^N,\ \lambda \|\xi\|^2\leq \sum_{j,l=1}^N \sigma^{jl}\xi_j\xi_l\leq \lambda^{-1} \|\xi\|^2,
 \end{equation}
 and
 \[
 \forall x\in \mathbb{R}^N\backslash\overline{\Omega},\quad \sigma^{jl}=\delta_{jl},
 \]
 where $\delta_{jl}$ is the Kronecker delta function, $1\leq j, l\leq N$. We also let $n(x)=\Re n(x)+\mathrm{i}\Im n(x)\in L^\infty(\mathbb{R}^N)$ be a complex-valued function such that
\begin{equation}\label{eq:reg2}
 \exists\, n_0>0,\ \ \forall x\in\mathbb{R}^N,\ \mbox{$\Re n(x)>n_0$\ \  and\ \ $\Im n(x)\geq 0$.}
\end{equation}
 It is further assumed that $n(x)\equiv 1$ for $x\in\mathbb{R}^N\backslash\overline{\Omega}$. $(\mathbb{R}^N; \sigma, n)$ denotes an acoustic medium in $\mathbb{R}^N$ with its inhomogeneity supported in $\Omega$. $\sigma^{-1}$ signifies the density tensor of the acoustic medium, whereas $\Re n$ and $\Im n$ are, respectively, associated with the modulus and the loss of the acoustic material parameters. We remark that \eqref{eq:reg1} and \eqref{eq:reg2} are the {\it physical conditions} satisfied by a regular medium. In what follows, $\sigma$ and $n$ are said to be {\it uniformly elliptic} with constants $\lambda$ and $n_0$, if they respectively, satisfy \eqref{eq:reg1} and \eqref{eq:reg2}. The inhomogeneity of the acoustic medium $(\Omega; \sigma, n)$ is located in an isotropic and homogeneous background/matrix medium whose material parameters are normalised to be $\sigma=\mathbf{I}_{N}$ and $n=1$, where $\mathbf{I}_{N}$ denotes the $N\times N$ identity matrix. $(\Omega; \sigma, n)$ is referred to as a {\it scatterer} in the following. The scatterer $(\Omega; \sigma, n)$ is said to be {\it isotropic} if $\sigma^{jl}(x)=\gamma(x)\delta_{jl}$, $1\leq j, l\leq N$, for a scalar function $\gamma\in L^\infty(\Omega)$, otherwise it is called {\it anisotropic}.

 Next, we consider the time-harmonic acoustic wave propagation in the space $(\mathbb{R}^N;\sigma, n)$. Let $u^i(x)$, $x\in\mathbb{R}^N$ be an entire solution to the following Helmholtz equation
 \begin{equation}\label{eq:Helm1}
 \Delta u^i+\kappa^2 u^i=0\quad\mbox{in}\ \ \mathbb{R}^N,
 \end{equation}
 where $\kappa\in\mathbb{R}_+$ denotes a wavenumber. The propagation of the acoustic wave $u^i$ will be interrupted due to the presence of the inhomogeneity $(\Omega; \sigma, n)$, and this leads to the so-called {\it scattering}. We let $u^s$ denote the perturbed/scattered wave field and $u=u^i+u^s$ denote the total wave field. The total wave field $u\in H_{loc}^1(\mathbb{R}^N)$ is governed by the following Helmholtz system
\begin{equation}\label{eq:Helm2}
\begin{cases}
& \displaystyle{\sum_{j,l=1}^N\frac{\partial }{\partial x_j}\left(\sigma^{jl}(x)\frac{\partial u}{\partial x_l}\right)(x)+\kappa^2 n(x) u(x)=0}\qquad\mbox{for}\ \ x\in\mathbb{R}^N,\smallskip\\
& \mbox{$u-u^i$ satisfies the Sommerfeld radiation condition}.
\end{cases}
\end{equation}
In \eqref{eq:Helm2}, a function $w(x)=u(x)-u^i(x)$ is said to satisfy the Sommerfeld radiation condition if the following limit
\begin{equation}\label{eq:sommerfeld}
\lim_{\|x\|\rightarrow+\infty}\|x\|^{(N-1)/2}\left\{\frac{\partial w(x)}{\partial \|x\|}-\mathrm{i}\kappa w(x)\right\}=0,
\end{equation}
holds uniformly for $\hat{x}:=x/\|x\|$, $x\in\mathbb{R}^N\backslash\{0\}$. It is known that $u$ admits the following asymptotic expansion as $\|x\|\rightarrow+\infty$ (cf. \cite{ColKre,Mcl,Ned}),
\begin{equation}\label{eq:farfield}
u(x)=u^i(x)+\frac{e^{\mathrm{i}\kappa\|x\|}}{\|x\|^{(N-1)/2}}\, u^\infty(\hat x)+\mathcal{O}\left(\frac{1}{\|x\|^{(N+1)/2}}\right),
\end{equation}
which holds uniformly in $\hat x\in\mathbb{S}^{N-1}$. $u^\infty(\hat x)=u^\infty(\hat x; u^i, (\Omega; \sigma, n))$ is known as the {\it far-field pattern} or the {\it scattering amplitude}.

An important inverse scattering problem arising from practical application is to recover $(\Omega; \sigma, n)$ by knowledge of the corresponding far-field pattern $u^\infty(\hat x)$. $(\Omega; \sigma, n)$ is said to be {\it invisible} under the wave interrogation by $u^i$ if $u^\infty(\hat x; u^i, (\Omega; \sigma, n))\equiv 0$, which corresponds to the nonidentification in the inverse scattering problem mentioned above. In the current work, we shall consider the cloaking technique in achieving the invisibility. Let $D\Subset\Omega$ be a bounded Lipschitz domain. Consider a cloaking device of the following form
\begin{equation}\label{eq:cloak}
(\Omega; \sigma, n)=(D;\sigma_a, n_a)\wedge (\Omega\backslash\overline{D}; \sigma_c, n_c).
\end{equation}
In \eqref{eq:cloak}, $(D; \sigma_a, n_a)$ denotes the target object being cloaked, and $(\Omega\backslash\overline{D}; \sigma_c, n_c)$ denotes the cloaking shell. For a practical cloaking device of the form \eqref{eq:cloak}, there are several crucial ingredients that one should incorporate into the design:

\medskip

\noindent(i).~The target object $(D; \sigma_a, n_a)$ can be allowed to be arbitrary (but regular). That is, the cloaking device should not be object-dependent. In what follows, this issue shall be referred to as the target independence for a cloaking device.\smallskip

\noindent(ii).~The cloaking medium $(\Omega\backslash\overline{D}; \sigma_c, n_c)$ should be feasible for construction and fabrication. Indeed, it would be the most practically feasible if $(\Omega\backslash\overline{D}; \sigma_c, n_c)$ is uniformly elliptic with fixed constants and isotropic as well. In what follows, this issue shall be referred to as the practical feasibility for a cloaking device.\smallskip

\noindent(iii).~For an ideal cloaking device, one can expect the following invisibility performance,
\begin{equation}\label{eq:ii}
u^\infty(\hat x; u^i, (\Omega; \sigma, n))= 0\quad \forall\hat{x}\in\mathbb{S}^{N-1},\ \ \forall u^i.
\end{equation}
However, in practice, especially in order to fulfil the requirements in items (i) and (ii) above, one can relax the ideal cloaking requirement \eqref{eq:ii} to be
\begin{equation}\label{eq:ni}
|u^\infty(\hat x; u^i, (\Omega; \sigma, n))|\ll 1,\quad \forall\hat{x}\in\mathbb{S}^{N-1},\ \ \forall u^i\in\mathscr{W},
\end{equation}
where $\mathscr{W}$ is a set of incident waves consisting of entire solutions to the Helmholtz equation \eqref{eq:Helm1}. That is, near-invisibility can be achieved for scattering measurements made with interrogating waves from the set $\mathscr{W}$. In what follows, this issue shall be referred to as the relaxation and approximation for a cloaking device.\medskip

In this paper, we aim to explore as much as possible the three issues listed above for a practical cloaking scheme.

\subsection{Existing developments and discussion}

A region of space is said to be \emph{cloaked} if its contents, together with the cloak, are invisible to a particular class of wave measurements. Invisibility cloaking could find striking applications in many areas of science and technology such as radar and sonar, medical imaging, earthquake science and, energy science and engineering, to name just a few. Due to its practical importance, the invisibility cloaking has received great attentions in the literature in recent years, and several cloaking schemes have been proposed, including one based on transformation optics \cite{GLU,GLU2,PenSchSmi,Leo} and another one based on plasmon resonances \cite{AE,MN}.

The plasmonic cloaking uses metamaterials with negative parameters, and can be used to cloak an active source. We refer to \cite{ACKLM,AKL,KLO,KLSW,LLL} and the references therein for the existing developments in this direction. The transformation-optics approach uses the transformation properties of optical parameters via a so-called push-forward to form the cloaking mediums. The transformation-optics mediums for an ideal cloak are nonnegative, but anisotropic and singular, possessing degeneracy and/or blowup singularities. In order to avoid the singular structures, various regularised cloaking schemes have been proposed and investigated in the literature, and instead of ideal cloaking, one considers approximate/near-invisibility cloaking for the regularised constructions. We refer to\cite{Ammari1,Ammari2,Ammari4,BL,BLZ,KOVW,GKLUoe,LiLiuRonUhl,Liu,Liu2,LiuSun} and the references therein for the existing developments in this direction. Though the regularised transformation-optics mediums are nonsingular, they retain the anisotropy, which still poses server difficulties to the practical fabrication. In \cite{GKLLU,GKLU_2,GKLU0}, the authors propose to further approximate the non-singular anisotropic cloaking mediums by isotropic ones using the theory of effective medium and inverse homogenisation. This is closely related to the issues of practical feasibility and relaxation/approximation discussed in Section 1.1. However, the isotropic cloaking medium obtained in \cite{GKLLU,GKLU_2,GKLU0} through the inverse homogenisation are still nearly-singular in the sense that the ellipticity constants of the cloaking material parameters are asymptotic depending on a regularisation parameter.

In this paper, we investigate the non-singular and isotropic cloaking issue through a different perspective. We consider the cloaking device \eqref{eq:cloak} in the ideal case directly for a given cloaking medium. This would lead to an interior transmission eigenvalue problem, which is the one considered in \cite{CCH}. In \cite{CCH}, the authors consider an interior transmission eigenvalue problem for inhomogeneous media containing sound-soft obstacles. We connect the theoretical study in \cite{CCH} with some important practical application on the invisibility cloaking, and propose a generalised interior transmission eigenvalue problem as well. The interior transmission eigenvalue problem arising from the study of inverse scattering theory \cite{ColKre,CM} has also received significant attentions in the literature in recent years \cite{CakCol}. Indeed, the invisibility cloaking problem is naturally connected to a certain interior transmission eigenvalue problem, as shall be discussed in Section 2. Such an observation was also made in a recent paper \cite{BPS} where the authors show that a generic inhomogeneous scatterer with a rectangular support cannot be ideally invisible to every incident wave; that is, it scatters every interrogating wave field. This idea was further picked up in \cite{BCN} where the authors numerically show that ideal invisibility can be achieved for certain wavenumbers with respect to a discrete and finite set of far-field measurement data. In \cite{EH1,EH2}, from a different perspective, the authors show that if the support of a generic inhomogeneous scatterer possesses certain irregularities including a corner, an edge or a circular cone, then it scatters every interrogating wave field. The corner scattering problem has been quantified in \cite{BLU} with stability estimates; see our remarks after Theorem~\ref{thm:3} for more relevant discussions. Our current study shall indicate that an acoustic scatterer, satisfying a certain non-transparency condition, is nearly-invisible with respect to certain incident wave fields. Those incident wave fields are generated from the Herglotz-approximation to certain interior transmission eigenfunctions. Furthermore, based on this study, we propose a novel cloaking scheme using non-singular and isotropic cloaking mediums. The proposed cloaking device takes a three-layer structure with a cloaked region, a lossy layer and a cloaking shell. The target medium in the cloaked region can be arbitrary but regular, whereas the mediums in the lossy layer and the cloaking shell are both non-singular and isotropic. To our best knowledge, the results obtained in the current article are new to the literature. Finally, we refer to \cite{CC,GKLU4,GKLU5,Nor,U2} for comprehensive surveys on the theoretical and experimental progress on invisibility cloaking in the literature; and we also refer to \cite{BPS,CakCol,CCG,CGH,CK,CKP,ColKre,CM,CMS,Kir,PavSyl,S} and the references therein for relevant theoretical and computational studies on the interior transmission eigenvalues.

The rest of the paper is organised as follows. In Section 2, we connect the interior transmission eigenvalue problem with the invisibility cloaking, along with some relevant discussions. In Section 3, we consider the isotropic invisibility cloaking, and establish the near-invisibility results. Section 4 is devoted to numerical verification and demonstration.

\section{Interior transmission eigenvalue problem and Herglotz-approximation}

Let us consider the scattering problem associated with a cloaking device of the form \eqref{eq:cloak}. We first assume that the cloaked region $D$ is isolated from the outer space $\mathbb{R}^N\backslash\overline{D}$. That is, we consider an idealised case that the scattering problem is described by the following Helmholtz system for $u\in H_{loc}^1(\mathbb{R}^N\backslash\overline{D})$,
\begin{equation}\label{eq:Helmi1}
\begin{cases}
& \displaystyle{\nabla\cdot(\sigma_c(x)\nabla u)(x)+\kappa^2 n_c(x) u(x)=0}\qquad\mbox{for}\ \ x\in\Omega\backslash\overline{D},\smallskip\\
& \displaystyle{\Delta u(x)+\kappa^2 u(x)=0}\hspace*{3.2cm}\mbox{for}\ \ x\in\mathbb{R}^N\backslash\overline{\Omega},\smallskip\\
& \mathcal{B} u(x)=0\hspace*{4.7cm}\text{for}\ \ x\in\partial D,\smallskip\\
& \mbox{$u-u^i$ satisfies the Sommerfeld radiation condition},
\end{cases}
\end{equation}
where $\mathcal{B} u:=u$ or $\mathcal{B} u:=\partial u/\partial \nu_{\sigma_c}$ with
\begin{equation}\label{eq:nd}
\frac{\partial u}{\partial\nu_{\sigma_c}}:=\sum_{j,l=1}^N\sigma_c^{jl}\nu_j\partial_{x^l} u\quad\text{on}\ \ \partial D,
\end{equation}
and $\nu=(\nu_j)_{j=1}^N\in\mathbb{S}^{N-1}$ the exterior unit normal vector to $\partial D$. Similar as before, we let $u^s:=u-u^i$ and $u^\infty$ signify the corresponding scattered wave field and the far-field pattern, respectively. If perfect invisibility is achieved for the scattering system \eqref{eq:Helmi1}, namely, $u^\infty(\hat x; (\Omega\backslash\overline{D};\sigma_c, n_c), u^i)\equiv 0$, then by the Rellich theorem (cf. \cite{ColKre}), one has that
\begin{equation}\label{eq:vanish1}
u^s(x)=0\ \ \ \text{for}\ \ x\in\mathbb{R}^N\backslash\overline{\Omega}.
\end{equation}
Next, by using the standard transmission condition across $\partial \Omega$ for the solution $u$ to \eqref{eq:Helmi1}, one has
\begin{equation}\label{eq:vanish2}
u\big|_{\partial \Omega^+}=u\big|_{\partial \Omega^-}\quad\text{and}\quad \frac{\partial u}{\partial\nu}\Big|_{\partial \Omega^+}=\frac{\partial u}{\partial\nu_{\sigma_c}}\Big|_{\partial \Omega^-}:=\sum_{j,l=1}^N\sigma_c^{jl}\nu_j\partial_{x^l} u\Big|_{\partial\Omega^-},
\end{equation}
where $\partial\Omega^{\pm}$ signify the limits taken from outside and inside of $\Omega$, respectively. Applying \eqref{eq:vanish1} to \eqref{eq:vanish2} and by setting $v(x):=u(x)$ and $w(x)=u^i(x)$ for $x\in \Omega$, one can readily show that the following PDE system holds for $(u, v)\in H^1(\Omega\backslash\overline{D})\times H^1(\Omega)$,
\begin{equation}\label{eq:ite}
\begin{cases}
\nabla\cdot(\sigma_c \nabla v)+\kappa^2 n_c v=0\quad & \mbox{in}\ \ \Omega\backslash\overline{D},\smallskip\\
\Delta w+\kappa^2 w=0\quad & \mbox{in}\ \ \Omega,\smallskip\\
\mathcal{B} w=0\quad & \mbox{on}\ \ \partial D,\smallskip\\
\displaystyle{v=w,\qquad \frac{\partial v}{\partial \nu_{\sigma_c}}=\frac{\partial w}{\partial\nu}} \quad & \mbox{on}\ \ \partial\Omega.
\end{cases}
\end{equation}
If for a certain $\kappa\in\mathbb{R}_+$, there exists a nontrivial pair of solutions to \eqref{eq:ite}, then $\kappa$ is called an interior transmission eigenvalue associated with $(\Omega\backslash\overline{D}; \sigma_c, n_c, \mathcal{B})$, and $(v, w)$ is called the corresponding pair of transmission eigenfunctions. It is pointed out that the interior transmission eigenvalue problem \eqref{eq:ite} with $\mathcal{B}w=w$ on $\partial D$ was first proposed and investigated in \cite{CCH} from a theoretical perspective. In this paper, we shall find some important application of \eqref{eq:ite} to invisibility cloaking. 

According to our discussion above, if perfect invisibility is obtained for the scattering system \eqref{eq:Helmi1}, then one has the eigenfunctions for the interior transmission eigenvalue problem \eqref{eq:ite}. However, the existence of interior transmission eigenfunctions for \eqref{eq:ite} does not necessarily imply perfect invisibility for the scattering system \eqref{eq:Helmi1}. Nevertheless, we shall show that near-invisibility can still be achieved under certain cirumstances. To that end, we first need to extend the interior transmission eigenfunction $w$ to the whole space $\mathbb{R}^N$ by the so-called Herglotz-approximation to form an incident wave field for \eqref{eq:Helmi1}. Starting from now and throughout the rest of the paper, we assume that $\Omega$ is of class $C^1$. Define
\begin{equation}\label{eq:app1}
w_\kappa^g(x):=\int_{\mathbb{S}^{N-1}} e^{i\kappa x\cdot \xi} g(\xi)\ ds(\xi),\quad x\in\mathbb{R}^N,\quad g\in L^2(\mathbb{S}^{N-1}).
\end{equation}
$w_\kappa^g$ is called a Herglotz wave function. We have
\begin{thm}[Theorem 2 in \cite{W}]\label{thm:herg1}
Let $\mathbf{W}_\kappa$ denote the space of all Herglotz wave functions of the form \eqref{eq:app1}.
Define, respectively,
\[
W_\kappa(\Omega):=\{u\in C^\infty(\Omega); (-\Delta-\kappa^2) u=0\},
\]
and
\[
\mathbf{W}_\kappa(\Omega):=\{u|_{\Omega}; u\in \mathbf{W}_\kappa\}.
\]
Then $\mathbf{W}_\kappa(\Omega)$ is dense in $W_\kappa(\Omega)\cap H^1(\Omega)$ with respect to the topology induced by the $H^1$-norm.
\end{thm}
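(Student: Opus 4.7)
The plan is to argue by duality via the Hahn--Banach theorem. Since $W_\kappa(\Omega)\cap H^1(\Omega)$ is a closed subspace of $H^1(\Omega)$ (closedness follows from interior elliptic regularity together with the continuity of $-\Delta-\kappa^2$ from $H^1$ to $H^{-1}$), it suffices to prove that every $\ell\in H^1(\Omega)^*$ which annihilates $\mathbf{W}_\kappa(\Omega)$ necessarily annihilates the whole of $W_\kappa(\Omega)\cap H^1(\Omega)$. By the Riesz representation theorem there exist $f_0,\ldots,f_N\in L^2(\Omega)$ with
\[
\ell(u)\;=\;\int_\Omega f_0\,\overline{u}\,dx+\sum_{j=1}^N\int_\Omega f_j\,\overline{\partial_j u}\,dx,
\]
and extending the $f_j$ by zero outside $\Omega$ identifies $\ell$ with a compactly supported distribution $T\in\mathcal{E}'(\mathbb{R}^N)$ satisfying $\mathrm{supp}\,T\subset\overline{\Omega}$.

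I would then introduce the Fourier--Laplace transform
\[
F(\zeta)\;:=\;\ell\bigl(e^{\mathrm{i}\kappa x\cdot\zeta}\bigr)\;=\;\bigl\langle T_x,\,e^{\mathrm{i}\kappa x\cdot\zeta}\bigr\rangle,\qquad \zeta\in\mathbb{C}^N,
\]
and observe that $F$ is entire on $\mathbb{C}^N$ of exponential type (Paley--Wiener), with type controlled by $\mathrm{diam}(\Omega)$. The hypothesis $\ell(w_\kappa^g)=0$ for every $g\in L^2(\mathbb{S}^{N-1})$, after a Fubini-type interchange justified by the continuity of $\ell$, forces $F|_{\mathbb{S}^{N-1}}\equiv 0$.

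The next step is the key geometric/analytic input. The complex quadric $V:=\{\zeta\in\mathbb{C}^N:\zeta\cdot\zeta=1\}$ is a smooth irreducible complex hypersurface containing the real sphere $\mathbb{S}^{N-1}$, and moreover $\mathbb{S}^{N-1}$ is a maximally totally real submanifold of $V$ (its real dimension equals the complex dimension of $V$). Since $F|_V$ is holomorphic on $V$ and vanishes on this totally real submanifold, analytic continuation yields $F|_V\equiv 0$, hence $F=P\cdot G$ on $\mathbb{C}^N$ with $P(\zeta)=\zeta\cdot\zeta-1$ and $G$ holomorphic. Invoking the H\"ormander division theorem for entire functions of exponential type, $G$ can be chosen of exponential type with the same support bound, so that its inverse Fourier--Laplace transform produces $S\in\mathcal{E}'(\mathbb{R}^N)$ with $\mathrm{supp}\,S\subset\overline{\Omega}$ and
\[
T\;=\;-\frac{1}{\kappa^2}(\Delta+\kappa^2)S \qquad \text{in } \mathcal{D}'(\mathbb{R}^N).
\]
For any $u\in W_\kappa(\Omega)\cap H^1(\Omega)$, extend $u$ to an $H^1$ function on a neighborhood of $\overline{\Omega}$ using the $C^1$-regularity of $\partial\Omega$; since $(\Delta+\kappa^2)u=0$ on a neighborhood of $\mathrm{supp}\,S$, the distributional pairing yields $\ell(u)=\langle T,u\rangle=-\kappa^{-2}\langle S,(\Delta+\kappa^2)u\rangle=0$, which closes the argument.

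The hardest step, in my view, is the division argument combined with the need to keep $\mathrm{supp}\,S$ inside $\overline{\Omega}$: a naive pointwise division in Fourier space destroys the exponential-type control, and one must appeal to a genuine Paley--Wiener-type division theorem (H\"ormander/Ehrenpreis) together with the particular algebraic structure of $P(\zeta)=\zeta\cdot\zeta-1$. A secondary delicate point is the integration-by-parts step in the final distributional pairing, which is where the $C^1$-smoothness of $\partial\Omega$ and a standard $H^1$ extension operator enter crucially. A parallel route via single/double layer-potential representations of Helmholtz solutions on $\Omega$ combined with the range characterization of the Herglotz operator is also available, but it ultimately rests on the same analyticity and division facts.
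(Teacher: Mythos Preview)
The paper does not give its own proof of this statement: it is quoted verbatim as Theorem~2 of Weck~\cite{W} and used as a black box. So there is no in-paper argument to compare against; what you have written is an independent proof sketch of Weck's theorem.

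Your overall strategy---Hahn--Banach duality, Paley--Wiener, vanishing of the entire function $F$ on $\mathbb{S}^{N-1}$, analytic continuation to the complex quadric $\{\zeta\cdot\zeta=1\}$, polynomial division $F=(\zeta\cdot\zeta-1)G$ with preservation of exponential type---is the classical Fourier-analytic route and is essentially how this density result is obtained in the literature. Those steps are sound as you describe them.

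There is, however, a genuine gap in your final paragraph. You write that $(\Delta+\kappa^2)u=0$ ``on a neighborhood of $\operatorname{supp}S$'', but Paley--Wiener only gives $\operatorname{supp}S\subset\overline{\Omega}$, while the Helmholtz equation for $u$ holds on the \emph{open} set $\Omega$; your $H^1$ extension $\tilde u$ certainly does not satisfy the equation across $\partial\Omega$. Thus the pairing $\langle S,(\Delta+\kappa^2)\tilde u\rangle$ does not vanish for the reason you state. The correct way to close the argument is to upgrade the regularity of $S$: since $(\Delta+\kappa^2)S=-\kappa^2 T\in H^{-1}(\mathbb{R}^N)$, elliptic regularity gives $S\in H^1_{\mathrm{loc}}$, hence $S\in H^1(\mathbb{R}^N)$ with $\operatorname{supp}S\subset\overline{\Omega}$. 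For a $C^1$ (indeed Lipschitz) domain this forces $S\in H^1_0(\Omega)$. Approximating $S$ in $H^1$ by $S_n\in C_c^\infty(\Omega)$ and using the weak form of $(\Delta+\kappa^2)u=0$ in $\Omega$ against the test functions $S_n$ then yields
\[
\ell(u)=\langle T,u\rangle=\kappa^{-2}\int_\Omega\nabla S\cdot\nabla u\,dx-\int_\Omega S\,u\,dx=0,
\]
which is the conclusion you want. This is precisely where the boundary regularity of $\Omega$ is used, but the mechanism is the identification $\{v\in H^1(\mathbb{R}^N):\operatorname{supp}v\subset\overline{\Omega}\}=H^1_0(\Omega)$, not an $H^1$ extension of $u$.
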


We shall also need to introduce the following non-transparency condition for $(\Omega; \sigma_c, n_c, \mathcal{B})$. Consider the following PDE,
\begin{equation}\label{eq:pde1}
\nabla\cdot(\sigma_c\nabla \psi)+\kappa^2 n_c \psi=0\quad\mbox{in}\ \ \Omega\backslash\overline{D},\qquad \mathcal{B}\psi|_{\partial D}=0,\quad \psi\big|_{\partial \Omega}=f\in H^{1/2}(\Omega).
\end{equation}
It is assumed that $\kappa^2$ is not an eigenvalue to \eqref{eq:pde1} in the sense that if $f\equiv 0$, then there exists only a trivial solution to \eqref{eq:pde1}. Hence one has a well-defined DtN map as follows,
\begin{equation}\label{eq:dtn}
\Lambda^\kappa_{\Omega\backslash\overline{D}, \sigma_c, n_c, \mathcal{B}}(f)=\frac{\partial\psi}{\partial\nu_{\sigma_c}}\bigg|_{\partial\Omega}\in H^{-1/2}(\partial\Omega),
\end{equation}
where $\psi\in H^1(\Omega\backslash\overline{D})$ is the unique solution to \eqref{eq:pde1}. Moreover, we assume that the PDE system \eqref{eq:pde1}, with the Dirichlet boundary condition $\psi\big|_{\partial \Omega}=f\in H^{1/2}(\Omega)$ replaced by a Neumann boundary condition $\partial \psi/\partial \nu_{\sigma_c}\big|_{\partial \Omega}=f\in H^{-1/2}(\partial \Omega)$, is also well-posed. Hence, the NtD map $\big(\Lambda^\kappa_{\Omega\backslash\overline{D}, \sigma_c, n_c, \mathcal{B}}\big)^{-1}: H^{-1/2}(\partial \Omega)\rightarrow H^{1/2}(\partial\Omega)$, is also well-defined. Next, we consider the following exterior scattering problem
\begin{equation}\label{eq:pde2}
\begin{cases}
& (\Delta+\kappa^2)u=0\quad\mbox{in}\ \ \mathbb{R}^N\backslash\overline{\Omega},\medskip\\
& u|_{\partial\Omega}=f\in H^{1/2}(\Omega),\\
&\mbox{$u$ satisfies the Sommerfeld radiation condition}.
\end{cases}
\end{equation}
Define the exterior DtN map by
\begin{equation}\label{eq:edtn}
\Gamma^\kappa_{\Omega}(g):=\frac{\partial u}{\partial\nu}\Big|_{\partial\Omega}\in H^{-1/2}(\partial\Omega),
\end{equation}
where $u\in H_{loc}^1(\mathbb{R}^N\backslash\overline{D})$ is the unique solution to \eqref{eq:pde2}. It is known that $\Gamma^\kappa_\Omega$ and its inverse $(\Gamma^\kappa_\Omega)^{-1}$ are both well-defined (cf. \cite{ColKre,Ned}). Then $(\Omega\backslash\overline{D}; \sigma_c, n_c, \mathcal{B})$ is said to satisfy the non-transparency condition associated with $\kappa$ if there holds
\begin{equation}\label{eq:norm}
\|\Lambda^\kappa_{\Omega\backslash\overline{D}, \sigma_c, n_c,\mathcal{B}}\circ(\Gamma^\kappa_\Omega)^{-1}\|_{\mathcal{L}\big({H}^{-1/2}(\partial\Omega), H^{-1/2}(\partial\Omega)\big)}\neq 1.
\end{equation}

\begin{rem}\label{rem:nontransparency}
It can be shown that one must have
\begin{equation}\label{eq:norm11}
\Lambda^\kappa_{\Omega\backslash\overline{D}, \sigma_c, n_c,\mathcal{B}}(f)\neq\Gamma_\Omega^\kappa(f)
\end{equation}
for any $f\in H^{1/2}(\partial\Omega)$ unless $f\equiv 0$. Indeed, for $f\in H^{1/2}(\partial\Omega)$, we let $\psi\in H^{1}(\Omega\backslash\overline{D})$ be the solution to \eqref{eq:pde1}, and $u\in H_{loc}^1(\Omega\backslash\overline{\Omega})$ be the solution to \eqref{eq:pde2}. Set $p=\psi\chi_{\Omega\backslash\overline{D}}+u\chi_{\mathbb{R}^3\backslash\overline{\Omega}}$. If $\Lambda_{\Omega\backslash\overline{D}, \sigma_c, n_c, \mathcal{B}}^\kappa(f)=\Gamma^\kappa_\Omega(f)$, then one readily verifies that $p\in H_{loc}^1(\mathbb{R}^3\backslash\overline{D})$ is the solution to the following system
\begin{equation}\label{eq:pde3}
\begin{cases}
\nabla\cdot (\sigma_c\nabla p)+\kappa^2 n_c p=0\quad & \mbox{in}\ \ \mathbb{R}^N\backslash\overline{D},\medskip\\
 \mathcal{B}p=0\quad & \mbox{on}\ \ \partial D, \\
\mbox{$p$ satisfies the Sommerfeld}&\hspace*{-2mm}\mbox{radiation condition}.
\end{cases}
\end{equation}
Hence, one must have that $p\equiv 0$ in $\mathbb{R}^N\backslash \overline{\Omega}$ (see Section 8, \cite{ColKre}), which readily implies $f\equiv 0$. Hence, it is unobjectionable to claim that the non-transparency condition \eqref{eq:norm} is a generic condition, when $\kappa$ is an interior transmission eigenvalue for $(\Omega\backslash\overline{D}; \sigma_c, n_c, \mathcal{B})$. Nevertheless, we would like to remark that it seems, at least according to our numerical observations, that the non-transparency condition is not satisfied for an inhomogeneous scatterer with an irregular/non-smooth support, say a corner in its support; see our remarks after Theorem~\ref{thm:3} for more relevant discussions.
\end{rem}

Now, we are in a position to present one of the main theorems of this paper which connects the interior transmission eigenvalue problem \eqref{eq:ite} to the invisibility cloaking.

\begin{thm}\label{thm:2}
Let $\kappa_0\in\mathbb{R}_+$ be an interior transmission eigenvalue associated with $(\Omega\backslash\overline{D}; \sigma_c, n_c, \mathcal{B})$, and $(v_0, w_0)\in H^1(\Omega\backslash\overline{D})\times H^1(\Omega)$ be a corresponding pair of eigenfunctions. For any sufficiently small $\varepsilon\in\mathbb{R}_+$, by Theorem~\ref{thm:herg1}, we let $w_{\kappa_0}^g\in\mathbf{W}_{\kappa_0}(\Omega)$ be such that
\begin{equation}\label{eq:herg2}
\|w_{\kappa_0}^g-w_0\|_{H^1(\Omega)}<\varepsilon.
\end{equation}
Consider the scattering problem \eqref{eq:Helmi1} by taking the incident wave field
\begin{equation}\label{eq:scattering2}
u^i=w_{\kappa_0}^g.
\end{equation}
If $(\Omega\backslash\overline{D}; \sigma_c, n_c, \mathcal{B})$ satisfies the non-transparency condition \eqref{eq:norm} with respect to $\kappa_0$, then there holds
\begin{equation}\label{eq:app3}
\Big|u^\infty(\hat x; w^g_{\kappa_0}, (\Omega\backslash\overline{D});\sigma_c, n_c, \mathcal{B}))\Big|\leq C \varepsilon,\quad\forall \hat x\in\mathbb{S}^{N-1},
\end{equation}
where $C$ is a positive constant depending only on $\kappa_0, \sigma_c, n_c$, $\mathcal{B}$ and $\Omega, D$.

\end{thm}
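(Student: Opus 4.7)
My plan is to construct, from the eigenfunction pair $(v_0,w_0)$ and the approximant $w_{\kappa_0}^g$, a surrogate total field that would produce zero scattering if the Herglotz approximation were exact, and then to estimate the defect of the true scattered field by solving a transmission problem whose boundary data is of order $\varepsilon$. Concretely, I would set
\[
U(x):=v_0(x)\ \text{for}\ x\in\Omega\setminus\overline D,\qquad U(x):=w_{\kappa_0}^g(x)\ \text{for}\ x\in\mathbb{R}^N\setminus\overline\Omega.
\]
By construction $U$ satisfies the cloaking PDE in the shell, is a free Helmholtz solution in the exterior, satisfies $\mathcal{B}U=0$ on $\partial D$, and agrees with $w_{\kappa_0}^g$ outside $\Omega$ so that the Sommerfeld condition on $U-w_{\kappa_0}^g$ is trivial. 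The only failure is on $\partial\Omega$: using the transmission identities $v_0=w_0$ and $\partial_{\nu_{\sigma_c}} v_0=\partial_\nu w_0$ on $\partial\Omega$ from \eqref{eq:ite}, the jumps of $U$ across $\partial\Omega$ are precisely $(w_{\kappa_0}^g-w_0)|_{\partial\Omega}$ and $\partial_\nu(w_{\kappa_0}^g-w_0)|_{\partial\Omega}$. Since $w_0$ and $w_{\kappa_0}^g$ both solve the free Helmholtz equation in $\Omega$, the elementary bound $\|\Delta(w_0-w_{\kappa_0}^g)\|_{L^2(\Omega)}\le\kappa_0^2\|w_0-w_{\kappa_0}^g\|_{L^2(\Omega)}$, together with the trace theorem on the $C^1$ domain $\Omega$ and the $H^{-1/2}$-realisation of the normal derivative via Green's identity, yields $\|[U]\|_{H^{1/2}(\partial\Omega)}+\|[\partial_\nu U]\|_{H^{-1/2}(\partial\Omega)}\le C(\kappa_0,\Omega)\,\varepsilon$.

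The actual total field $u$ of \eqref{eq:Helmi1} is continuous with matching conormal flux across $\partial\Omega$, so $\psi:=u-U$ satisfies the homogeneous cloaking equation on $\Omega\setminus\overline D$, the free Helmholtz equation with Sommerfeld radiation in the exterior, $\mathcal{B}\psi=0$ on $\partial D$, and jumps on $\partial\Omega$ equal to $-[U]$ and $-[\partial_\nu U]$, hence of size $O(\varepsilon)$ in $H^{1/2}(\partial\Omega)\times H^{-1/2}(\partial\Omega)$. Introducing the interior DtN map $\Lambda:=\Lambda^{\kappa_0}_{\Omega\setminus\overline D,\sigma_c,n_c,\mathcal{B}}$ and the exterior DtN map $\Gamma:=\Gamma^{\kappa_0}_\Omega$, the two transmission conditions on $\partial\Omega$ collapse to a single boundary equation of the form
\[
(I-\Lambda\circ\Gamma^{-1})\bigl(\partial_\nu\psi|_{\partial\Omega^+}\bigr)=[\partial_\nu U]|_{\partial\Omega}-\Lambda\bigl([U]|_{\partial\Omega}\bigr),
\]
whose right-hand side is $O(\varepsilon)$ in $H^{-1/2}(\partial\Omega)$. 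I would then use the non-transparency hypothesis \eqref{eq:norm}, combined with the injectivity of $I-\Lambda\circ\Gamma^{-1}$ supplied by Remark~\ref{rem:nontransparency} and a Fredholm-alternative argument for the underlying boundary-integral realisation, to conclude that $I-\Lambda\circ\Gamma^{-1}$ is boundedly invertible on $H^{-1/2}(\partial\Omega)$. This delivers $\|\psi|_{\partial\Omega}\|_{H^{1/2}(\partial\Omega)}+\|\partial_\nu\psi|_{\partial\Omega}\|_{H^{-1/2}(\partial\Omega)}\le C\varepsilon$.

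Since $U\equiv w_{\kappa_0}^g$ in $\mathbb{R}^N\setminus\overline\Omega$, the physical scattered field equals $\psi$ there, and so does its far-field pattern. Representing $u^\infty$ as a Green's boundary integral of the Cauchy data of $\psi$ on $\partial\Omega$ then yields the claimed bound $|u^\infty(\hat x)|\le C\varepsilon$ uniformly in $\hat x\in\mathbb{S}^{N-1}$, with $C$ depending on $\kappa_0,\sigma_c,n_c,\mathcal{B},\Omega$ and $D$. The hardest step, I expect, will be the quantitative inversion: the bare condition $\|\Lambda\circ\Gamma^{-1}\|\ne 1$ only excludes a borderline value of the operator norm, so extracting a uniform bound on $(I-\Lambda\circ\Gamma^{-1})^{-1}$ requires one to exploit the compactness of the associated difference of boundary-layer operators in tandem with the uniqueness output of Remark~\ref{rem:nontransparency}, and it is through this inverse that the constant $C$ in \eqref{eq:app3} inherits its dependence on all the problem data.
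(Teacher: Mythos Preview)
Your surrogate construction and reduction to the boundary equation $(I-\Lambda\circ\Gamma^{-1})(\partial_\nu\psi|_{\partial\Omega^+})=O(\varepsilon)$ is exactly the paper's route, only repackaged: the paper works directly with $u-v_0$ in $\Omega\setminus\overline D$, subtracts \eqref{eq:e1} from \eqref{eq:e2}, and arrives at the same identity \eqref{eq:d3}. Where you diverge is the inversion step. The paper does \emph{not} invoke Fredholm theory or the injectivity in Remark~\ref{rem:nontransparency}; it reads the non-transparency condition \eqref{eq:norm} literally as a norm dichotomy. If $\|\Lambda\circ\Gamma^{-1}\|<1$, a Neumann series inverts $I-\Lambda\circ\Gamma^{-1}$ on $H^{-1/2}(\partial\Omega)$ and yields \eqref{eq:d4}. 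If $\|\Lambda\circ\Gamma^{-1}\|>1$, the paper passes to $\Gamma\circ\Lambda^{-1}$, asserts that its norm is $<1$, and runs the symmetric Neumann-series argument on the Dirichlet side to obtain \eqref{eq:d5}. Either bound then feeds into the well-posedness of the exterior sound-soft or sound-hard problem to give \eqref{eq:app3}.

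Your suspicion that the bare condition $\|\Lambda\circ\Gamma^{-1}\|\ne1$ does not by itself force bounded invertibility of $I-\Lambda\circ\Gamma^{-1}$ is well placed: the paper's implication from $\|\Lambda\circ\Gamma^{-1}\|>1$ to $\|\Gamma\circ\Lambda^{-1}\|<1$ is not valid for a general bounded invertible operator. Your proposed repair via compactness plus the injectivity supplied by Remark~\ref{rem:nontransparency} would put the inversion on firmer footing, but it comes with its own obligation: you would need $\Lambda\circ\Gamma^{-1}-I$ to be compact on $H^{-1/2}(\partial\Omega)$, which amounts to the principal symbols of the interior and exterior DtN maps agreeing on $\partial\Omega$. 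That holds when $\sigma_c=\mathbf{I}_N$ near $\partial\Omega$ (as in Section~3) but not for general anisotropic $\sigma_c$. So the paper's argument is shorter but leans on an operator-norm step that is not generally justified; your Fredholm route is conceptually sounder but needs an extra structural hypothesis to close.
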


\begin{proof}
Since $\kappa_0\in \mathbb{R}_+$ is an interior transmission eigenvalue associated with $(\Omega\backslash\overline{D}; \sigma_c, n_c, \mathcal{B})$ and $v_0, w_0$ are the corresponding eigenfunctions, we see from \eqref{eq:ite} that
\begin{equation}\label{eq:e1}
\begin{cases}
\nabla\cdot(\sigma_c\nabla v_0)+\kappa_0^2 n_c v_0=0\quad &\mbox{in}\ \ \Omega\backslash\overline{D},\medskip\\
\Delta w_0+\kappa_0^2 w_0=0\quad\ \ \, & \mbox{in}\ \ \Omega,\medskip\\
\mathcal{B}v_0=0\quad & \mbox{on}\ \ \partial D,\medskip\\
v_0=w_0,\quad \displaystyle{\frac{\partial v_0}{\partial\nu_{\sigma_c}}=\frac{\partial w_0}{\partial \nu}}\ \ & \mbox{on}\ \ \partial\Omega,
\end{cases}
\end{equation}
By \eqref{eq:Helmi1} and setting $u^s:=u-u^i=u-w_{\kappa_0}^g$, we clearly have
\begin{equation}\label{eq:e2}
\begin{cases}
\nabla\cdot(\sigma_c\nabla u)+\kappa_0^2 n_c u=0\quad & \mbox{in}\ \ \Omega\backslash\overline{D},\medskip\\
\mathcal{B} u=0\quad & \mbox{on}\ \ \partial D,\medskip\\
\displaystyle{u=w_{\kappa_0}^g+u^s,\quad \frac{\partial u}{\partial\nu_{\sigma_c}}=\frac{\partial w_{\kappa_0}^g}{\partial\nu_{\sigma_c}}+\frac{\partial u^s}{\partial\nu_{\sigma_c}} }\ \ & \mbox{on}\ \ \partial\Omega,
\end{cases}
\end{equation}
and moreover
\begin{equation}\label{eq:e2h}
\begin{cases}
& (\Delta+\kappa_0^2) u^s=0\quad\mbox{in}\ \ \mathbb{R}^N\backslash\overline{\Omega},\medskip\\
& \mbox{$u^s$ satisfies the Sommerfeld radiation condition. }
\end{cases}
\end{equation}
By subtracting \eqref{eq:e1} from \eqref{eq:e2}, and using the transmission conditions on $\partial \Omega$ in \eqref{eq:e1}, we then obtain
\begin{equation}\label{eq:e3}
\begin{cases}
\nabla\cdot(\sigma_c\nabla(u-v_0))+\kappa_0^2 n_c(u-v_0)=0\quad & \mbox{in}\ \ \Omega\backslash\overline{D},\medskip\\
\mathcal{B} (u-v_0)=0\quad & \mbox{on}\ \ \partial D,\medskip\\
u-v_0=u^s+w_{\kappa_0}^g-v_0=u^s+w_{\kappa_0}^g-w_0\quad & \mbox{on}\ \ \partial\Omega,\medskip\\
\displaystyle{\frac{\partial u}{\partial\nu_{\sigma_c}}-\frac{\partial v_0}{\partial\nu_{\sigma_c}}= \frac{\partial u^s}{\partial\nu_{\sigma_c}}+\frac{\partial w_{\kappa_0}^g}{\partial\nu_{\sigma_c}}-\frac{\partial v_0}{\partial\nu_{\sigma_c}}=  \frac{\partial u^s}{\partial\nu_{\sigma_c}}+\frac{\partial w_{\kappa_0}^g}{\partial\nu_{\sigma_c}}-\frac{\partial w_0}{\partial\nu_{\sigma_c}} } \ \ & \mbox{on}\ \ \partial\Omega .
\end{cases}
\end{equation}
Next, by noting the non-transparency condition \eqref{eq:norm}, we first treat the case by assuming that
\begin{equation}\label{eq:norm1}
\|\Lambda_{\Omega\backslash\overline{D}, \sigma_c, n_c,\mathcal{B}}^{\kappa_0}\circ(\Gamma_\Omega^{\kappa_0})^{-1}\|_{\mathcal{L}\big({H}^{-1/2}(\partial\Omega), H^{-1/2}(\partial\Omega)\big)}<1.
\end{equation}
By \eqref{eq:e3}, we have
\begin{equation}\label{eq:d1}
\left(\frac{\partial u}{\partial \nu_{\sigma_c}}-\frac{\partial v_0}{\partial\nu_{\sigma_c}}\right)\bigg|_{\partial \Omega}=\Lambda^{\kappa_0}_{\Omega\backslash\overline{D},\sigma_c, n_c, \mathcal{B}}\big( (u-v_0)|_{\partial\Omega}\big),
\end{equation}
and hence
\begin{equation}\label{eq:d2}
\begin{split}
&\left(\frac{\partial u^s}{\partial\nu_{\sigma_c}}+\frac{\partial w_{\kappa_0}^g}{\partial\nu_{\sigma_c}}-\frac{\partial w_0}{\partial\nu_{\sigma_c}} \right)\bigg|_{\partial\Omega}
=\Lambda^{\kappa_0}_{\Omega\backslash\overline{D}, \sigma_c, n_c, \mathcal{B}}\big((u^s+w_{\kappa_0}^g-w_0)|_{\partial\Omega}\big)\\
=& \Lambda^{\kappa_0}_{\Omega\backslash\overline{D}, \sigma_c, n_c, \mathcal{B}}(u^s|_{\partial\Omega})+\Lambda^{\kappa_0}_{\Omega\backslash\overline{D}, \sigma_c, n_c, \mathcal{B}}\big((w_{\kappa_0}^g-w_0)|_{\partial\Omega}\big)\\
=& \Lambda^{\kappa_0}_{\Omega\backslash\overline{D}, \sigma_c, n_c, \mathcal{B}}\circ\big(\Gamma^{\kappa_0}_{\Omega}\big)^{-1}\left(\frac{\partial u^s}{\partial\nu}\Big|_{\partial\Omega}\right)+\Lambda^{\kappa_0}_{\Omega\backslash\overline{D}, \sigma_c, n_c, \mathcal{B}}\big((w_{\kappa_0}^g-w_0)|_{\partial\Omega}\big),
\end{split}
\end{equation}
which then yields
\begin{equation}\label{eq:d3}
\begin{split}
& \left(I-\Lambda^{\kappa_0}_{\Omega\backslash\overline{D}, \sigma_c, n_c, \mathcal{B}}\circ\big(\Gamma^{\kappa_0}_\Omega\big)^{-1}\right)\left(\frac{\partial u^s}{\partial\nu}\Big|_{\partial\Omega}\right)\\
=&-\left(\frac{\partial w_{\kappa_0}^g}{\partial\nu_{\sigma_c}}-\frac{\partial w_0}{\partial\nu_{\sigma_c}}\right)+\Lambda^{\kappa_0}_{\Omega\backslash\overline{D}, \sigma_c, n_c, \mathcal{B}}\left((w_{\kappa_0}^g-w_0)\big|_{\partial\Omega}\right).
\end{split}
\end{equation}
Combining \eqref{eq:herg2}, \eqref{eq:norm1} and \eqref{eq:d3}, together with straightforward calculations, one readily has
\begin{equation}\label{eq:d4}
\left\|\frac{\partial u^s}{\partial\nu}\right\|_{H^{-1/2}(\partial\Omega)}\leq C\varepsilon,
\end{equation}
where $C$ is a positive constant depending only on $\kappa_0, \sigma_c, n_c$, $\mathcal{B}$ and $\Omega, D$.

For the other case with
\begin{equation}\label{eq:norm2h}
\|\Lambda^{\kappa_0}_{\Omega\backslash\overline{D}, \sigma_c, n_c, \mathcal{B}}\circ\big(\Gamma^{\kappa_0}_\Omega\big)^{-1}\|_{\mathcal{L}\big({H}^{-1/2}(\partial\Omega), H^{-1/2}(\partial\Omega)\big)}>1;
\end{equation}
that is
\begin{equation}\label{eq:norm2}
\|\Gamma^{\kappa_0}_\Omega\circ\big(\Lambda^{\kappa_0}_{\Omega\backslash\overline{D}, \sigma_c, n_c, \mathcal{B}}\big)^{-1}\|_{\mathcal{L}\big({H}^{-1/2}(\partial\Omega), H^{-1/2}(\partial\Omega)\big)}<1,
\end{equation}
by a completely similar argument, one can show that
\begin{equation}\label{eq:d5}
\left\|u^s\right\|_{H^{1/2}(\partial\Omega)}\leq C\varepsilon,
\end{equation}
where $C$ is a positive constant depending only on $\kappa_0, \sigma_c, n_c$, $\mathcal{B}$ and $\Omega, D$.

That is, one either has \eqref{eq:d4} or \eqref{eq:d5} for the exterior scattering system \eqref{eq:e2h}. Finally, by the well-posedness of the scattering problem from a sound-hard or sound-soft obstacle (see \cite{ColKre,Mcl}), one readily has \eqref{eq:app3}.

The proof is complete.

\end{proof}

Now, we let $\mathcal{S}_{\Omega\backslash\overline{D},\sigma_c,n_c,\mathcal{B}}\subset\mathbb{R}_+$ denote the set of all the interior transmission eigenvalues associated with $(\Omega\backslash\overline{D}; \sigma_c, n_c, \mathcal{B})$, and set
\begin{equation}\label{eq:waveset1}
\begin{split}
&\mathcal{W}_{\Omega\backslash\overline{D},\sigma_c,n_c,\mathcal{B}}:=\bigcup_{\kappa\in \mathcal{S}_{\Omega\backslash\overline{D},\sigma_c,n_c,\mathcal{B}}}\Big\{w; (v, w)\in H^1(\Omega\backslash\overline{D})\times H^1(\Omega)\ \text{is a pair of interior}\\
& \text{transmission eigenfunctions corresponding to $\kappa$ associated with $(\Omega\backslash\overline{D}; \sigma_c, n_c, \mathcal{B})$} \Big\}.
\end{split}
\end{equation}
Clearly, $\mathcal{W}_{\Omega\backslash\overline{D},\sigma_c,n_c,\mathcal{B}}$ is a subspace of $W(\Omega):=\cup_{\kappa\in\mathbb{R}_+} W_\kappa(\Omega)$, and by Theorem~\ref{thm:herg1}, for a sufficiently small $\varepsilon\in\mathbb{R}_+$, we let $\mathcal{W}^\varepsilon_{\Omega\backslash\overline{D},\sigma_c,n_c,\mathcal{B}}\subset\mathbf{W}(\Omega):=\cup_{\kappa\in\mathbb{R}_+} \mathbf{W}_\kappa(\Omega)$ be an $\varepsilon$-net of $\mathcal{W}_{\Omega\backslash\overline{D},\sigma_c,n_c,\mathcal{B}}$ in $H^1(\Omega)$ in the sense that for any $w\in \mathcal{W}_{\Omega\backslash\overline{D},\sigma_c,n_c,\mathcal{B}}$, there exists a $w_\kappa^g\in\mathbf{W}(\Omega)$ such that
\begin{equation}\label{eq:herg3}
\|w-w_\kappa^g\|_{H^1(\Omega)}<\varepsilon.
\end{equation}
By Theorem~\ref{thm:2}, one clearly has that
\begin{thm}\label{thm:3}
For any $w_{\kappa_0}^g\in \mathcal{W}^\varepsilon_{\Omega\backslash\overline{D},\sigma_c,n_c,\mathcal{B}}$ associated with an interior transmission eigenvalue $\kappa_0\in \mathcal{S}_{\Omega\backslash\overline{D},\sigma_c,n_c,\mathcal{B}}$, if $(\Omega\backslash\overline{D}; \sigma_c, n_c, \mathcal{B})$ is non-transparent with respect to $\kappa_0$, then there holds
\begin{equation}\label{eq:appg1}
\Big|u^\infty(\hat x; w^g_{\kappa_0}, (\Omega\backslash\overline{D});\sigma_c, n_c, \mathcal{B}))\Big|\leq C \varepsilon,\quad\forall \hat x\in\mathbb{S}^{N-1},
\end{equation}
where $C$ is a positive constant depending only on $\kappa_0, \sigma_c, n_c$, $\mathcal{B}$ and $\Omega, D$.
\end{thm}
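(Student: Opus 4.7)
The plan is to deduce Theorem~\ref{thm:3} as an essentially immediate corollary of Theorem~\ref{thm:2}, once we unpack what it means for $w_{\kappa_0}^g$ to belong to the $\varepsilon$-net $\mathcal{W}^\varepsilon_{\Omega\backslash\overline{D},\sigma_c,n_c,\mathcal{B}}$. First I would fix an arbitrary element $w_{\kappa_0}^g$ of this $\varepsilon$-net that is attached (through the union in \eqref{eq:waveset1}) to the interior transmission eigenvalue $\kappa_0 \in \mathcal{S}_{\Omega\backslash\overline{D},\sigma_c,n_c,\mathcal{B}}$. The defining property \eqref{eq:herg3} of the $\varepsilon$-net guarantees a witness $w_0 \in \mathcal{W}_{\Omega\backslash\overline{D},\sigma_c,n_c,\mathcal{B}}$ at this eigenvalue satisfying $\|w_0 - w_{\kappa_0}^g\|_{H^1(\Omega)} < \varepsilon$, and by definition of $\mathcal{W}_{\Omega\backslash\overline{D},\sigma_c,n_c,\mathcal{B}}$ there is a companion function $v_0 \in H^1(\Omega\backslash\overline{D})$ such that $(v_0,w_0)$ is a pair of interior transmission eigenfunctions for $\kappa_0$ in the sense of \eqref{eq:ite}.

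Next I would verify that all hypotheses of Theorem~\ref{thm:2} are in place: $\kappa_0$ is an interior transmission eigenvalue associated with $(\Omega\backslash\overline{D};\sigma_c,n_c,\mathcal{B})$; $(v_0,w_0)$ is the corresponding pair of eigenfunctions; the $H^1$-closeness condition \eqref{eq:herg2} is exactly the defining inequality of the $\varepsilon$-net; and the non-transparency condition \eqref{eq:norm} at $\kappa_0$ is assumed by hypothesis of Theorem~\ref{thm:3}. Applying Theorem~\ref{thm:2} with incident field $u^i = w_{\kappa_0}^g$ then yields \eqref{eq:app3}, which is exactly the claimed bound \eqref{eq:appg1}. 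Since $w_{\kappa_0}^g$ was chosen arbitrarily in $\mathcal{W}^\varepsilon_{\Omega\backslash\overline{D},\sigma_c,n_c,\mathcal{B}}$, this completes the argument.

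There is no genuine obstacle in this deduction; the only point requiring a brief comment is the uniformity of the constant $C$. The constant delivered by Theorem~\ref{thm:2} is explicitly stated to depend only on $\kappa_0$, $\sigma_c$, $n_c$, $\mathcal{B}$, $\Omega$, and $D$, exactly the list of data permitted in Theorem~\ref{thm:3}, so no further bookkeeping on $C$ is needed. All the substantive analysis, in particular the dichotomy between \eqref{eq:norm1} and \eqref{eq:norm2h}, the resulting resolvent identity \eqref{eq:d3} for $\partial u^s/\partial\nu$, and the invocation of well-posedness for the exterior sound-hard or sound-soft scattering problem to convert a boundary estimate into the far-field estimate, is already contained in the proof of Theorem~\ref{thm:2}. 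Consequently Theorem~\ref{thm:3} is proved simply by applying Theorem~\ref{thm:2} pointwise over the $\varepsilon$-net.
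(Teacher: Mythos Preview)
Your proposal is correct and matches the paper's approach exactly: the paper states Theorem~\ref{thm:3} immediately after the sentence ``By Theorem~\ref{thm:2}, one clearly has that'' and gives no separate proof, so it is treated as a direct corollary obtained by unpacking the definition of the $\varepsilon$-net and applying Theorem~\ref{thm:2}.
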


Therefore, by Theorem~\ref{thm:3}, as long as the non-transparency condition holds, the cloaked region $D$ with the coating $(\Omega\backslash\overline{D}; \sigma_c, n_c)$ is nearly invisible to the wave interrogation for any incident field from $\mathcal{W}^\varepsilon_{\Omega\backslash\overline{D},\sigma_c,n_c,\mathcal{B}}$, under the assumption that the cloaked region $D$ is isolated from the outer space. Some remarks and discussions are in order.

\begin{enumerate}
\item Clearly, the existence and distribution of interior transmission eigenvalues and eigenfunctions for \eqref{eq:ite} shall be of crucial importance. The existence, discreteness and infiniteness of the interior transmission eigenvalues for \eqref{eq:ite} under general assumptions on $\sigma_c$ and $n_c$ for the case with $\mathcal{B}w=w$ was established in \cite{CCH}. We believe that the case with $\mathcal{B}w=\partial w/\partial \nu_{\sigma_c}$ on $\partial D$ can be treated similarly. Inversely, for certain specific wavenumbers and entire wave fields, the design of $(\Omega\backslash\overline{D}; \sigma_c, n_c, \mathcal{B})$ such that those wavenumbers and wave fields are the respective interior transmission eigenvalues and eigenfunctions shall also be of great interest for customised invisibility cloaking constructions. We leave the theoretical investigation on these issues for our future study. In Section 4, we present extensive numerical experiments to illustrate the discreteness and infiniteness of $\mathcal{S}_{\Omega\backslash\overline{D},\sigma_c,n_c,\mathcal{B}}$, and the validity of Theorems~\ref{thm:2} and \ref{thm:3}.

\item The non-transparency condition \eqref{eq:norm} is critical for the near-invisibility result in Theorem~\ref{thm:3}. Our numerical examples in Section 4 indicate that if $\partial\Omega$ is smooth/regular, then near-invisibility can be achieved at almost all the computed interior transmission eigenvalues; whereas if $\partial \Omega$ is irregular, say possessing a corner, then near-invisibility generically cannot be achieved. The numerical observation is consistent with the theoretical studies in \cite{BLU,BPS,EH1,EH2}. Indeed, as discussed in Section 1.2, it is shown in \cite{BPS,EH1,EH2} that if the support of the scatterer possesses certain irregularities, then it scatters every interrogating wave field. In \cite{BLU}, it is further quantified that the scattered wave field from a corner possesses a positive lower bound. Hence, it might be justifiable to conclude that the non-transparency condition \eqref{eq:norm} holds true for generic acoustic mediums with smooth/regular supports. More quantitatively, according to \eqref{eq:d3},
\begin{equation}\label{eq:lowerbound1}
 \left\|I-\Lambda^{\kappa_0}_{\Omega\backslash\overline{D}, \sigma_c, n_c, \mathcal{B}}\circ\big(\Gamma^{\kappa_0}_\Omega\big)^{-1}\right\|_{\mathcal{L}\big({H}^{-1/2}(\partial\Omega), H^{-1/2}(\partial\Omega)\big)}^{-1}
\end{equation}
should be a regular number when $\kappa_0\in\mathcal{S}_{\Omega\backslash\overline{D}, \sigma_c, n_c, \mathcal{B}}$ and, both $\partial D$ and $\partial \Omega$ are regular/smooth; whereas \eqref{eq:lowerbound1} either blows up or becomes very large when $\kappa_0\in\mathcal{S}_{\Omega\backslash\overline{D}, \sigma_c, n_c, \mathcal{B}}$ and, $\partial D$ or $\partial \Omega$ are irregular. Providing more justifications for the above conclusion is fraught with difficulties, we shall also leave it for further investigation.

\item Theorem~\ref{thm:3} is based on the idealised assumption that the cloaked region $D$ is completely isolated from the outer space. In Section 3, we shall present a finite realisation of $\mathcal{B}$ on $\partial D$ by using properly designed isotropic mediums with loss. The loss mediums are also regular and isotropic, and moreover for the target-independence consideration of the cloaking device, it should enable the object being cloaked to be arbitrary.

\end{enumerate}

\section{Isotropic invisibility cloaking}

Let $(\Omega\backslash\overline{D}; \sigma_c, n_c, \mathcal{B})$ be the one considered in Theorem~\ref{thm:3}. It is assumed that $\partial D$ is of class $C^2$ and, that $n_c>n_0$ is real and $\sigma_c=\mathbf{I}_N$. Let $\Sigma\Subset D$ be a domain of Lipschitz class. Let $\tau\in\mathbb{R}_+$ be an asymptotically small parameter. Set
\begin{equation}\label{eq:lossy1}
\sigma_l(x)=\gamma\tau^{-2} \mathrm{I}_{N\times N},\quad n_l=\alpha+\beta\tau^{-2}\mathrm{i}\quad \mbox{for}\ \ x\in D\backslash\overline{\Sigma},
\end{equation}
if $\mathcal{B}w=w$ in \eqref{eq:ite}; and
\begin{equation}\label{eq:lossy2}
\sigma_l(x)=\gamma\tau^2 \mathrm{I}_{N\times N},\quad n_l=\alpha\tau^2+\beta\tau^2\mathrm{i}\quad \mbox{for}\ \ x\in D\backslash\overline{\Sigma},
\end{equation}
if $\mathcal{B}w=\partial w/\partial\nu$ in \eqref{eq:ite}, where $\alpha, \beta$ and $\gamma$ are positive constants. Consider an acoustic medium configuration as follows,
\begin{equation}\label{eq:cloakr1}
(\mathbb{R}^N; \sigma, n)=(\Sigma;\sigma_a, n_a)\wedge (D\backslash\overline{\Sigma}; \sigma_l, n_l)\wedge (\Omega\backslash\overline{D}; \sigma_c, n_c)\wedge (\mathbb{R}^N\backslash\overline{\Omega}; \mathbf{I}_N, 1),
\end{equation}
where $(\Sigma; \sigma_a, n_a)$ is a regular acoustic medium. Then, we have

\begin{thm}\label{thm:4}
Let $(\mathbb{R}^N; \sigma, n)$ be described in \eqref{eq:cloakr1}. Let $\kappa_0$ and $w_{\kappa_0}^g$ be the same as those in Theorem~\ref{thm:2} satisfying the non-transparency condition \eqref{eq:norm}. Consider the scattering system \eqref{eq:Helm2} corresponding to $(\mathbb{R}^N; \sigma, n)$ with $u^i=w_{\kappa_0}^g$. Then we have
\begin{equation}\label{eq:est1}
\big|u^\infty(\hat x; w_{\kappa_0}^g, (\Omega;\sigma, n))\big|\leq C\big(\tau\|w_{\kappa_0}^g\|_{H^1(\Omega)}+\varepsilon \big),\quad\forall \hat x\in\mathbb{S}^{N-1},
\end{equation}
where $C$ is positive constant depending only on $\Omega, D, \Sigma$, and $\kappa_0, \alpha, \beta, \gamma, n_c$, but independent of $\sigma_a, n_a$ and $\hat x$, $w_{\kappa_0}^g$.
\end{thm}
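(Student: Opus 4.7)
My approach is to compare the far-field of the three-layer realisation with that of the idealised isolated problem \eqref{eq:Helmi1} and to estimate the discrepancy by an energy calculation in the lossy layer. Let $u_\tau\in H^1_{loc}(\mathbb{R}^N)$ denote the total field for the full configuration \eqref{eq:cloakr1} with incident wave $w^g_{\kappa_0}$, and let $\tilde u$ denote the total field of \eqref{eq:Helmi1} associated with $(\Omega\backslash\overline D;\sigma_c,n_c,\mathcal{B})$ driven by the same incident wave. Theorem~\ref{thm:2} already provides $|\tilde u^\infty(\hat x)|\leq C\varepsilon$ uniformly in $\hat x$, so by the triangle inequality it suffices to establish
\begin{equation*}
|u_\tau^\infty(\hat x)-\tilde u^\infty(\hat x)|\leq C\tau\|w^g_{\kappa_0}\|_{H^1(\Omega)},\qquad \hat x\in\mathbb{S}^{N-1},
\end{equation*}
with a constant $C$ independent of the target $(\sigma_a,n_a)$.

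Set $w:=u_\tau-\tilde u$. In $\mathbb{R}^N\backslash\overline D$ this difference satisfies $\nabla\cdot(\sigma_c\nabla w)+\kappa_0^2 n_c w=0$ in $\Omega\backslash\overline D$, the free Helmholtz equation in $\mathbb{R}^N\backslash\overline\Omega$, the usual transmission conditions on $\partial\Omega$, and the Sommerfeld radiation condition. On $\partial D$ the defect reduces to $\mathcal{B}w|_{\partial D}=\mathcal{B}u_\tau|_{\partial D}$, since $\mathcal{B}\tilde u=0$. The resulting problem for $w$ is a standard exterior obstacle-scattering problem in the shell geometry $(\Omega\backslash\overline D;\sigma_c,n_c)$, whose well-posedness (cf. \cite{ColKre,Mcl}) yields
\begin{equation*}
|w^\infty(\hat x)|\leq C\,\|\mathcal{B}u_\tau\|_{X(\partial D)}\qquad\text{uniformly in}\ \hat x\in\mathbb{S}^{N-1},
\end{equation*}
with $X(\partial D)=H^{1/2}(\partial D)$ in the sound-soft case and $X(\partial D)=H^{-1/2}(\partial D)$ in the sound-hard case. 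The task therefore reduces to proving the lossy-layer estimate $\|\mathcal{B}u_\tau\|_{X(\partial D)}\leq C\tau\|w^g_{\kappa_0}\|_{H^1(\Omega)}$.

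For that bound I would test the PDE $\nabla\cdot(\sigma_l\nabla u_\tau)+\kappa_0^2 n_l u_\tau=0$ against $\overline{u_\tau}$ on $D\backslash\overline\Sigma$ and integrate by parts. In the Dirichlet scaling \eqref{eq:lossy1}, the imaginary part of the resulting identity isolates the absorptive weight $\kappa_0^2\beta\tau^{-2}\|u_\tau\|_{L^2(D\backslash\overline\Sigma)}^2$, balanced against boundary fluxes on $\partial D\cup\partial\Sigma$ that are matched through the transmission conditions to the exterior shell trace. Interior elliptic regularity on $D\backslash\overline\Sigma$ then promotes this $L^2$ smallness to $\|u_\tau\|_{H^{1/2}(\partial D)}=O(\tau)$. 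In the Neumann scaling \eqref{eq:lossy2}, the analogous computation uses the real part of the same identity, and the transmission relation $\partial u_\tau/\partial\nu\,|_{\partial D^+}=\gamma\tau^2\,\partial u_\tau/\partial\nu\,|_{\partial D^-}$ directly produces $\|\partial_{\nu}u_\tau\|_{H^{-1/2}(\partial D)}=O(\tau^2)$.

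The hard part is closing these estimates with a constant independent of the target $(\sigma_a,n_a)$. A naive well-posedness bound for the full transmission scattering problem would cause $C$ to depend on the target, whereas the physical purpose of the lossy layer is precisely to screen $\Sigma$ from the outer shell. I would therefore establish a separate, $\tau$-uniform and target-independent a priori bound $\|u_\tau\|_{H^1(\Omega\backslash\overline\Sigma)}\leq C\|w^g_{\kappa_0}\|_{H^1(\Omega)}$ by coupling the fixed-coefficient coercivity of the sesquilinear form on $\Omega\backslash\overline D$ with the absorptive estimate on $D\backslash\overline\Sigma$ above, thereby decoupling the interior region $\Sigma$ from the remainder of the energy budget. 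This decoupling step, which delivers the target-independence of the constant, is the crux of the argument; once it is in hand, the comparison in the first two steps together with the triangle inequality $|u_\tau^\infty|\leq|u_\tau^\infty-\tilde u^\infty|+|\tilde u^\infty|$ closes the proof.
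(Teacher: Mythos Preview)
Your overall architecture---compare with the idealised problem of Theorem~\ref{thm:2} and bound the discrepancy by the boundary defect $\mathcal{B}u_\tau|_{\partial D}$, which is made small by the lossy layer---is exactly the paper's. But two concrete steps in your execution would not close.

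\textbf{Integration domain and target-independence.} Testing the equation against $\overline{u_\tau}$ on $D\backslash\overline\Sigma$ produces a boundary flux on $\partial\Sigma$ that involves the arbitrary target $(\sigma_a,n_a)$; you cannot discard it without an a priori bound you do not yet have. The paper instead integrates over \emph{all} of $\Omega$: the only boundary term is then on $\partial\Omega$, the real symmetric $\sigma$-term vanishes upon taking imaginary parts, and the contribution of $\Sigma$ to $\kappa_0^2\int_\Omega \Im n\,|u|^2$ has a sign (since $\Im n_a\ge 0$) and is simply dropped. This yields $\|u_\tau\|_{L^2(D\backslash\overline\Sigma)}^2\le C\tau^{\pm 2}\bigl|\int_{\partial\Omega}\partial_\nu u\,\overline u\,ds\bigr|$ with a constant genuinely independent of the target.

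\textbf{Norms on $\partial D$ and the need for Lemma~\ref{lem:aux}.} The $L^2$ smallness in the layer does \emph{not} promote to $\|u_\tau\|_{H^{1/2}(\partial D)}=O(\tau)$ by ``interior elliptic regularity'' with $\tau$-uniform constants, because the layer coefficients scale like $\tau^{\pm 2}$. The paper only obtains the weaker bounds $\|u_\tau\|_{H^{-1/2}(\partial D)}=O(\tau)$ (sound-soft) and $\|\partial_\nu u_\tau\|_{H^{-3/2}(\partial D)}=O(\tau)$ (sound-hard), and it does so by a duality argument: pair the trace with $\varphi$, extend $\varphi$ to an $H^2(D)$ function vanishing on $\Sigma$, and use Green's formula together with the rescaled equation $\Delta u_\tau=-\kappa_0^2(\alpha\tau^2+i\beta)\gamma^{-1}u_\tau$ in the layer. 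Since the exterior comparison problem for $w=u_\tau-\tilde u$ is then driven by data in these weaker spaces, one needs a separate continuity result (the paper's Lemma~\ref{lem:aux}, proved via layer potentials) for the exterior solution map from $H^{-1/2}(\partial D)$, respectively $H^{-3/2}(\partial D)$. Your standard well-posedness bound from $H^{1/2}$, respectively $H^{-1/2}$, is not available here.

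Finally, the paper does not establish a separate $\tau$-uniform a priori bound on $u_\tau$; instead it bootstraps. The right-hand side $\bigl|\int_{\partial\Omega}\partial_\nu u\,\overline u\,ds\bigr|$ is re-expanded in terms of $\|u^s\|_{H^{1/2}(\partial\Omega)}$ and $\|w^g_{\kappa_0}\|_{H^1(\Omega)}$, substituted back into the comparison inequality, and the resulting self-referential term in $\|u^s\|_{H^{1/2}(\partial\Omega)}$ carries an $O(\tau)$ coefficient that is absorbed for small $\tau$. This replaces the ``crux'' step you left open.
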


\begin{proof}
Let us consider the scattering system \eqref{eq:Helm2} corresponding to $(\Omega; \sigma, n)$ described in \eqref{eq:cloakr1}. By multiplying both sides of the equation in \eqref{eq:Helm2} by the complex conjugate of $u$, namely $\overline{u}$, and integrating over $\Omega$, together with the use of integration by parts, we have
\begin{equation}\label{eq:f1}
\begin{split}
& 0= \int_{\Omega}\big[\nabla\cdot(\sigma(x)\nabla u(x))+\kappa_0^2 n(x) u(x) \big]\cdot\overline{u}(x)\ dV(x)\\
=&-\int_{\Omega}(\sigma\nabla u)(x)\cdot(\nabla\overline{u})(x)\ dx+\kappa_0^2\int_{\Omega} n(x) |u(x)|^2\ dV(x)\\
&\qquad\qquad\qquad+\int_{\partial\Omega}\frac{\partial u}{\partial\nu}(x)\cdot\overline{u}(x)\ ds(x)
\end{split}
\end{equation}
By taking the imaginary parts of both sides of \eqref{eq:f1}, one can easily verify that
\begin{equation}\label{eq:f2}
\left|\kappa_0^2\Im n_l\int_{D\backslash\overline{\Sigma}}|u(x)|^2 dV(x)\right|\, dx\leq \left|\int_{\partial\Omega}\frac{\partial u}{\partial \nu}(x)\cdot\overline{u}(x)\, ds(x)  \right|.
\end{equation}
Hence, if $n_l$ is given in \eqref{eq:lossy1}, one has
\begin{equation}\label{eq:f3}
\|u\|_{L^2(D\backslash\overline{\Sigma})}\leq \kappa_0\sqrt{\beta}\tau\left|\int_{\partial\Omega}\frac{\partial u}{\partial \nu}\cdot \overline{u}\, ds\right|^{1/2},
\end{equation}
whereas if $n_l$ is given in \eqref{eq:lossy2}, then one has
\begin{equation}\label{eq:f4}
\|u\|_{L^2(D\backslash\overline{\Sigma})}\leq \kappa_0\sqrt{\beta}\tau^{-1}\left|\int_{\partial\Omega}\frac{\partial u}{\partial \nu}\cdot \overline{u}\, ds\right|^{1/2}.
\end{equation}

Next, we first consider the case with $(D\backslash\overline{\Sigma}; \sigma_l, n_l)$ given in \eqref{eq:lossy1}; that is, $\mathcal{B} v=v$ in \eqref{eq:ite}. We claim that

\medskip
\noindent\underline{\bf Claim 1.}~Let $u^+$ and $u^-$ denote the traces of $u$ on $\partial D$ when approaching $\partial D$, respectively, from the exterior and interior of $D$. Then there holds
\begin{equation}\label{eq:f5}
\|u^+\|_{H^{-1/2}(\partial D)}\leq C \tau \left|\int_{\partial\Omega}\frac{\partial u}{\partial \nu}\cdot \overline{u}\, ds\right|^{1/2},
\end{equation}
where $C$ is a positive constant depending only on $\kappa_0$, $\alpha, \beta, \gamma,$ and $D, \Sigma$, but independent of $(\Sigma; \sigma_a, n_a)$.
\medskip

\noindent\underline{\bf Proof of Claim 1.}~~By the transmission conditions across $\partial D$ for $u\in H_{loc}^1(\mathbb{R}^N)$, one clearly has $u^+=u^-$, and hence in order to prove \eqref{eq:f5}, it suffices for us to verify that the same estimate holds for $u^-$. To that end, we make use of the following duality identity,
\begin{equation}\label{eq:f6}
\|u^-\|_{H^{-1/2}(\partial D)}=\sup_{\|\varphi\|_{H^{1/2}(\partial D)}\leq 1}\left|\int_{\partial D} u^-\cdot \varphi\ ds \right|.
\end{equation}
For any $\varphi\in H^{1/2}(\partial D)$, there exists $w\in H^2(D)$ such that (cf. \cite{Wlo})\smallskip
\begin{enumerate}
\item[i).]~~$\displaystyle{\frac{\partial w}{\partial\nu}=\varphi}$ and $w=0$ on $\partial D$;\smallskip

\item[ii).]~~$w=0$ on $\Sigma$, and $\|w\|_{H^2(D)}\leq C\|\varphi\|_{H^{1/2}(\partial D)}$;
\end{enumerate}
where $C$ is a positive constant depending only on $\Sigma$ and $D$. Then we have
\begin{equation}\label{eq:f7}
\begin{split}
& \int_{\partial D} u^-\cdot \varphi\ ds=\int_{\partial D} u^-\cdot\frac{\partial w}{\partial \nu}-\frac{\partial u^-}{\partial\nu}\cdot w\ ds\\
=& \int_{D\backslash\overline{\Sigma}} u(x)\cdot\Delta w(x)-\Delta u(x)\cdot w(x)\ dV(x).
\end{split}
\end{equation}
Noting that in $D\backslash\overline{\Sigma}$, one has
\begin{equation}\label{eq:f8}
\nabla\cdot(\sigma_l\nabla u)(x)+\kappa_0^2 n_l u(x)=0\quad\mbox{for}\ \ x\in D\backslash\overline{\Sigma},
\end{equation}
which together with \eqref{eq:lossy1} readily implies that
\begin{equation}\label{eq:f9}
\Delta u=-\kappa_0^2(\alpha\tau^2+\mathrm{i}\beta)u\quad\mbox{in}\ \ D\backslash\overline{\Sigma}.
\end{equation}
By plugging \eqref{eq:f9} into \eqref{eq:f7}, together with the use of Schwarz inequality, we then have
\begin{equation}\label{eq:f10}
\begin{split}
\left|\int_{\partial D}u^-\cdot \varphi\ ds\right|\leq & \left(1+\kappa_0^2|\alpha+\mathrm{i}\beta| \right)\|u\|_{L^2(D\backslash\overline{\Sigma})}\|w\|_{H^2(D)}\\
\leq & C \left(1+\kappa_0^2|\alpha+\mathrm{i}\beta| \right)\|u\|_{L^2(D\backslash\overline{\Sigma})}\|\varphi\|_{H^{1/2}(\partial D)},
\end{split}
\end{equation}
which in combination with \eqref{eq:f3}, along with straightforward calculations, readily verifies \eqref{eq:f5} for $u^-$, and hence for $u^+$ as well. This completes the proof of Claim 1. \hfill $\Box$

\medskip

For the other case with $(D\backslash\overline{\Sigma}; \sigma_l, n_l)$ given in \eqref{eq:lossy2}; that is, $\mathcal{B} w=\partial w/\partial\nu$ in \eqref{eq:ite}. We can show that

\medskip
\noindent\underline{\bf Claim 2.}~Let $\partial u^+/\partial\nu$ and $\partial u^-/\partial\nu$ denote the traces of $\partial u/\partial\nu$ on $\partial D$ when approaching $\partial D$, respectively, from the exterior and interior of $D$. Then there holds
\begin{equation}\label{eq:f11}
\left\|\frac{\partial u^+}{\partial\nu}\right\|_{H^{-3/2}(\partial D)}\leq C \tau \left|\int_{\partial\Omega}\frac{\partial u}{\partial \nu}\cdot \overline{u}\, ds\right|^{1/2},
\end{equation}
where $C$ is a positive constant depending only on $\kappa_0$, $\alpha, \beta, \gamma$ and $D, \Sigma$, but independent of $(\Sigma; \sigma_a, n_a)$.
\medskip

\noindent\underline{\bf Proof of Claim 2.}~~Claim 2 can be proved by following a similar argument to that for the proof of Claim 1. Indeed, we first estimate $\|\partial u^-/\partial\nu\|_{H^{-3/2}(\partial D)}$, and make use of the following duality identity
\begin{equation}\label{eq:f6h}
\left\|\frac{\partial u^-}{\partial\nu}\right\|_{H^{-3/2}(\partial D)}=\sup_{\|\varphi\|_{H^{3/2}(\partial D)}\leq 1}\left|\int_{\partial D} \frac{\partial u^-}{\partial\nu}\cdot \phi\ ds \right|.
\end{equation}
For any $\phi\in H^{3/2}(\partial D)$, there exists $w\in H^2(D)$ such that (cf. \cite{Wlo})\smallskip
\begin{enumerate}
\item[i).]~~$\displaystyle{\frac{\partial w}{\partial\nu}=0}$ and $w=\phi$ on $\partial D$;\smallskip

\item[ii).]~~$w=0$ on $\Sigma$, and $\|w\|_{H^2(D)}\leq C\|\phi\|_{H^{3/2}(\partial D)}$;
\end{enumerate}
where $C$ is a positive constant depending only on $\Sigma$ and $D$. Then by a similar argument to that for the proof of Claim 1 in deriving \eqref{eq:f10}, together with the use of \eqref{eq:lossy2}, one can show that
\begin{equation}\label{eq:f12}
\left\|\frac{\partial u^-}{\partial \nu} \right\|_{H^{-3/2}(\partial D)}\leq \frac{C}{\sqrt{\beta}}(\gamma+\kappa_0^2|\alpha+\mathrm{i}\beta|)\tau^{-1}\left| \int_{\partial\Omega}\frac{\partial u}{\partial\nu}\cdot\overline{u}\, ds\right|^{1/2}.
\end{equation}
Finally, by using the following transmission condition across $\partial D$
\[
\sigma_l\frac{\partial u^-}{\partial \nu}=\frac{\partial u^+}{\partial\nu}\quad\mbox{on}\ \ \partial D,
\]
and \eqref{eq:f12}, one readily sees \eqref{eq:f11}. This completes the proof of Claim 2. \hfill $\Box$

\medskip

We are ready to prove \eqref{eq:est1}. Let us first treat the case with $\mathcal{B} w=w$ in \eqref{eq:ite}. Set
\begin{equation}\label{eq:g1}
f= u^+|_{\partial D}\in H^{1/2}(\partial D)\ \mbox{and}\ u^s(x)=u(x)-w_{\kappa_0}^g(x)\ \ \mbox{for}\ x\in\mathbb{R}^N\backslash\overline{D},
\end{equation}
It is easily seen that
\begin{equation}\label{eq:g2}
\begin{cases}
(\Delta+\kappa_0^2 n) u=0\quad & \mbox{in}\ \ \mathbb{R}^N\backslash\overline{D},\smallskip\\
u=f\qquad & \mbox{on}\ \ \partial D,\smallskip\\
u=u^s+w_{\kappa_0}^g\qquad & \mbox{in}\ \ \mathbb{R}^N\backslash\overline{D},\smallskip\\
\mbox{$u^s$ satisfies the Sommerfeld}&\hspace*{-2.5mm}\mbox{radiation condition}.
\end{cases}
\end{equation}
We also introduce $u_c\in H_{loc}^1(\mathbb{R}^N\backslash\overline{D})$ and $u_c^s(x):=u_c(x)-w_{\kappa_0}^g(x)$, $x\in\mathbb{R}^N\backslash\overline{D}$, satisfying
\begin{equation}\label{eq:g3}
\begin{cases}
(\Delta+\kappa_0^2 n) u_c=0\quad & \mbox{in}\quad\mathbb{R}^N\backslash\overline{D},\medskip\\
u_c=0\quad & \mbox{on}\ \ \partial D,\\
\mbox{$u_c^s$ satisfies the Sommerfeld}&\hspace*{-2.5mm}\mbox{radiation condition}.
\end{cases}
\end{equation}
Set
\begin{equation}\label{eq:g4}
\widetilde{u}(x)=u(x)-u_c(x),\quad x\in\mathbb{R}^N\backslash\overline{D}.
\end{equation}
Then $\widetilde{u}\in H_{loc}^1(\mathbb{R}^N\backslash\overline{D})$ satisfies
\begin{equation}\label{eq:g5}
\begin{cases}
&(\Delta+\kappa_0^2 n_c)\widetilde{u}=0\quad\mbox{in}\ \ \mathbb{R}^N\backslash\overline{D},\smallskip\\
& \widetilde{u}=f\quad\mbox{on}\ \ \partial D,\smallskip\\
&\mbox{$\widetilde u$ satisfies the Sommerfeld radiation condition. }
\end{cases}
\end{equation}
By Lemma~\ref{lem:aux} in the following, we have
\begin{equation}\label{eq:g6}
\|\widetilde u\|_{H^{1/2}(\partial \Omega)}\leq C\|f\|_{H^{-1/2}(\partial D)},
\end{equation}
where $C$ is a positive constant depending only on $\Omega, D$ and $\kappa_0, n_c$. That is,
\begin{equation}\label{eq:g7}
\|u^s-u_c^s\|_{H^{1/2}(\partial\Omega)}=\|u-u_c\|_{H^{1/2}(\partial\Omega)}=\|\widetilde{u}\|_{H^{1/2}(\partial\Omega)}\leq C\|f\|_{H^{-1/2}(\partial D)},
\end{equation}
which in turn implies that
\begin{equation}\label{eq:g8}
\|u^s\|_{H^{1/2}(\partial \Omega)}\leq \|u_c^s\|_{H^{1/2}(\partial\Omega)}+C\|f\|_{H^{-1/2}(\partial\Omega)}.
\end{equation}
Next, by the argument in the proof of Theorem~\ref{thm:2} (cf. \eqref{eq:d4} and \eqref{eq:d5}), we see that
\begin{equation}\label{eq:x1}
\|u_c^s\|_{H^{1/2}(\partial \Omega)}\leq C\varepsilon.
\end{equation}
By applying \eqref{eq:x1}, \eqref{eq:f5} to \eqref{eq:g8}, we have
\begin{equation}\label{eq:g9}
\|u^s\|_{H^{1/2}(\partial\Omega)}\leq C\varepsilon+C \tau\left|\int_{\partial\Omega}\frac{\partial u}{\partial\nu}\cdot\overline{u}\ ds \right|^{1/2}.
\end{equation}
Next, by Green's formula and Schwarz inequality, one can deduce as follows
\begin{equation}\label{eq:g10}
\begin{split}
&\left|\int_{\partial\Omega}\frac{\partial u}{\partial\nu}\cdot\overline{u}\ ds \right|\\
=&\left| \int_{\Omega}\left(\frac{\partial u^s}{\partial\nu}+\frac{\partial w_{\kappa_0}^g}{\partial\nu} \right)\cdot(\overline{u^s}+\overline{w_{\kappa_0}^g})\ ds  \right|\\
\leq & \left|\int_{\partial\Omega}\frac{\partial u^s}{\partial\nu}\cdot\overline{u^s}\ ds\right|+\left|\int_{\partial\Omega}\frac{\partial u^s}{\partial\nu}\cdot\overline{w_{\kappa_0}^g}\ ds\right|+\left|\int_{\partial\Omega}\frac{\partial w_{\kappa_0}^g}{\partial\nu}\cdot\overline{u^s}\ ds\right|\\
&\quad +\big|\|\nabla w_{\kappa_0}^g\|^2_{L^2(\Omega)}-\kappa_0^2\|w_{\kappa_0}^g\|^2_{L^2(\Omega)} \big|\\
\leq & \left\|\frac{\partial u^s}{\partial\nu}\right\|_{H^{-1/2}(\partial\Omega)}\|u^s\|_{H^{1/2}(\partial\Omega)}+\left\|\frac{\partial u^s}{\partial \nu} \right\|_{H^{1/2}(\partial\Omega)}\|w_{\kappa_0}^g\|_{H^{1/2}(\partial\Omega)}\\
&\left\|\frac{\partial w_{\kappa_0}^g}{\partial\nu} \right\|_{H^{-1/2}(\partial\Omega)}\|u^s\|_{H^{1/2}(\partial\Omega)}+\|\nabla w_{\kappa_0}^g\|^2_{L^2(\Omega)}+\kappa_0^2\|w_{\kappa_0}^g\|^2_{L^2(\Omega)} \\
\leq & \|\Gamma_\Omega^{\kappa_0}\|_{\mathcal{L}(H^{1/2}(\partial\Omega), H^{-1/2}(\partial\Omega))}\|u^s\|^2_{H^{1/2}(\partial\Omega)}\\
&+\left\|\frac{\partial u^s}{\partial\nu} \right\|_{H^{-1/2}(\partial\Omega)}^2+\frac{1}{4}\|w_{\kappa_0}^g\|^2_{H^{1/2}(\partial\Omega)}+\|u^s\|_{H^{1/2}(\partial\Omega)}^2\\
&+\frac{1}{4}\left\|\frac{\partial w_{\kappa_0}^g}{\partial\nu}\right\|_{H^{-1/2}(\partial\Omega)}^2+\|\nabla w_{\kappa_0}^g\|^2_{L^2(\Omega)}+\kappa_0^2\|w_{\kappa_0}^g\|^2_{L^2(\Omega)}\\
&\leq \|\Gamma^{\kappa_0}_\Omega\|_{\mathcal{L}(H^{1/2}(\partial\Omega), H^{-1/2}(\partial\Omega))}\|u^s\|^2_{H^{1/2}(\partial\Omega)}
+\|u^s\|_{H^{1/2}(\partial\Omega)}^2\\
&+\|\Gamma^{\kappa_0}_\Omega\|^2_{\mathcal{L}(H^{1/2}(\partial\Omega), H^{-1/2}(\partial\Omega))}\|u^s\|^2_{H^{1/2}(\partial\Omega)}+C\|w_{\kappa_0}^g\|_{H^1(\Omega)}^2
\end{split}
\end{equation}
where $C$ is positive constant depending only on $\kappa_0$ and $\Omega$. By plugging \eqref{eq:g10} into \eqref{eq:g9}, we then have
\begin{equation}\label{eq:g11}
\begin{split}
\|u^s\|_{H^{1/2}(\partial\Omega)}\leq & C\varepsilon+C\tau\Big(1+\|\Gamma_\Omega\|_{\mathcal{L}(H^{1/2}(\partial\Omega), H^{-1/2}(\partial\Omega) )   }  \Big) \|u^s\|_{H^{1/2}(\partial\Omega)}\\
&+C\tau\|w_{\kappa_0}^g\|_{H^1(\Omega)}.
\end{split}
\end{equation}
For sufficiently small $\tau$, we obviously have from \eqref{eq:g11} that
\begin{equation}\label{eq:g12}
\|u^s\|_{H^{1/2}(\partial\Omega)}\leq C\big(\varepsilon+\tau\|w_0^g\|_{H^1(\Omega)}\Big)
\end{equation}
Finally, by the well-posedness of the acoustic scattering problem from a sound-soft obstacle (see, \cite{ColKre,Mcl}), one readily has \eqref{eq:est1} from \eqref{eq:g12}.

The other case with $\mathcal{B} v=\partial v/\partial v$ in \eqref{eq:ite} can be proved by following a similar argument. In what follows, we only sketch the necessary modifications that will be needed. In \eqref{eq:g1}, $f=u^+|_{\partial D}\in H^{1/2}(\partial D)$ should be modified to be $f=\partial u^+/\partial\nu\in H^{-1/2}(\partial D)$; and in \eqref{eq:g2} and \eqref{eq:g5}, the Dirichlet boundary conditions on $\partial D$ should be modified to be the corresponding Neumann ones. Then, by following similar arguments in \eqref{eq:g6}--\eqref{eq:g10}, along with the use of \eqref{eq:f11}, one can arrive at a similar estimate to \eqref{eq:g11} for $\|u^s\|_{H^{1/2}(\partial\Omega)}$, and hence verify \eqref{eq:est1}.

The proof is complete.
\end{proof}

The following lemma was needed in the proof of Theorem~\ref{thm:4}.

\begin{lem}\label{lem:aux}
Let $(\mathbb{R}^N\backslash\overline{D}; \sigma_c, n_c)$ be described in \eqref{eq:cloakr1}. Let $u\in H_{loc}^1(\mathbb{R}^N\backslash\overline{D})$ be the (unique) solution to
\begin{equation}\label{eq:gg5}
\begin{cases}
&(\Delta+\kappa_0^2 n){u}=0\qquad\ \mbox{in}\ \ \mathbb{R}^N\backslash\overline{D},\smallskip\\
& {u}=f\in H^{1/2}(\partial D)\quad\mbox{on}\ \ \partial D,\smallskip\\
&\mbox{$ u$ satisfies the Sommerfeld radiation condition. }
\end{cases}
\end{equation}
Then there holds
\begin{equation}\label{eq:gg6}
\|u\|_{H^1(\Omega\backslash\overline{D})}\leq C\|f\|_{H^{-1/2}(\partial D)},
\end{equation}
where $C$ is positive constant depending only on $\kappa_0, n_c$ and $\Omega, D$. If the Dirichlet boundary condition in \eqref{eq:gg5} is replaced by
\[
\frac{\partial u}{\partial\nu}=f\in H^{-1/2}(\partial D)\quad\mbox{on}\ \ \partial D,
\]
then there holds
\begin{equation}\label{eq:gg7}
\|u\|_{H^1(\Omega\backslash\overline{D})}\leq C\|f\|_{H^{-3/2}(\partial D)},
\end{equation}
where $C$ is positive constant depending only on $\kappa_0, n_c$ and $\Omega, D$.
\end{lem}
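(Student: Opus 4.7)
The plan is to first reduce the exterior radiating problem \eqref{eq:gg5} to a boundary value problem on the bounded domain $\Omega\setminus\overline{D}$, and then to deduce the stability bound \eqref{eq:gg6} by a transposition argument that exploits the elliptic regularity afforded by $\partial D\in C^2$. For the reduction, I would use the exterior DtN map $\Gamma^{\kappa_0}_\Omega$ from \eqref{eq:edtn} to replace the Sommerfeld radiation condition by the nonlocal condition $\partial u/\partial\nu = \Gamma^{\kappa_0}_\Omega(u|_{\partial\Omega})$ on $\partial\Omega$. The resulting sesquilinear form on $H^1(\Omega\setminus\overline{D})$ is bounded and of G{\aa}rding type, while uniqueness follows from Rellich's lemma and unique continuation applied to the radiating extension; standard Fredholm theory then delivers existence, uniqueness, and the classical bound $\|u\|_{H^1(\Omega\setminus\overline{D})}\le C\|f\|_{H^{1/2}(\partial D)}$ (cf.\ \cite{ColKre,Mcl,Ned}).

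To upgrade this to the weak-norm estimate \eqref{eq:gg6} one employs duality. For each test datum $g$ in the dual of the target space, I consider the adjoint interior problem on $\Omega\setminus\overline{D}$ with source $g$, homogeneous Dirichlet condition on $\partial D$, and the conjugate exterior DtN condition on $\partial\Omega$. The $C^2$-smoothness of $\partial D$ allows classical elliptic regularity (Agmon--Douglis--Nirenberg) to give $H^2$-regularity of the adjoint solution $v$ in a neighbourhood of $\partial D$, so that $\partial v/\partial\nu|_{\partial D}\in H^{1/2}(\partial D)$ with norm controlled by $\|g\|$. Green's identity, combined with $u=f$ and $v=0$ on $\partial D$ and with the cancellation of boundary terms on $\partial\Omega$ arising from the duality between $\Gamma^{\kappa_0}_\Omega$ and its conjugate, yields the identity $\langle u,g\rangle = \langle f,\partial v/\partial\nu\rangle_{\partial D}$. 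The $H^{-1/2}(\partial D)$--$H^{1/2}(\partial D)$ pairing then supplies \eqref{eq:gg6} upon taking the supremum over $g$.

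The Neumann estimate \eqref{eq:gg7} is established by exactly the same program with the roles of Dirichlet and Neumann data on $\partial D$ swapped in both the original and the adjoint problems: one now tests against an adjoint problem carrying vanishing Neumann trace on $\partial D$, and the relevant pairing becomes the $H^{-3/2}(\partial D)$--$H^{3/2}(\partial D)$ duality, which is precisely what generates the index shift from $H^{-1/2}$ to $H^{-3/2}$.

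The main obstacle I anticipate is ensuring that the reduced boundary value problem, and hence also the adjoint problem used in the transposition step, is uniquely solvable at the specific wavenumber $\kappa_0$; this requires showing that $\kappa_0$ is not a spurious resonance of the coupled interior/exterior system, which in turn relies on Rellich's lemma together with unique continuation across $\partial\Omega$. Some care is needed there because $n$ is discontinuous across $\partial\Omega$, so unique continuation has to be invoked separately in $\Omega\setminus\overline{D}$ and in $\mathbb{R}^N\setminus\overline{\Omega}$ and then reconciled through the natural transmission conditions.
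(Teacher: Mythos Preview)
Your approach differs substantially from the paper's. The paper gives no self-contained argument at all: it simply cites Theorem~2.2 of Kirsch--P\"aiv\"arinta \cite{KP} together with the Sobolev mapping properties of layer potentials from \cite{Mcl,Ned}. In that framework one represents $u$ as a layer potential on $\partial D$, solves the associated boundary integral equation, and reads off the weak-norm estimate directly from the mapping properties of the single and double layer operators on the Sobolev scale. Your route---variational reduction to $\Omega\setminus\overline{D}$ via the exterior DtN map, followed by a transposition argument with an adjoint problem---is a genuinely different strategy.

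There is, however, a real gap in your transposition step. To obtain $\partial v/\partial\nu\in H^{1/2}(\partial D)$ from $H^2$-regularity of the adjoint solution near $\partial D$, the source $g$ must lie in $L^2(\Omega\setminus\overline{D})$, not merely in $(H^1(\Omega\setminus\overline{D}))^*$. With $g\in L^2$, taking the supremum yields only $\|u\|_{L^2(\Omega\setminus\overline{D})}\le C\|f\|_{H^{-1/2}(\partial D)}$, one full derivative short of \eqref{eq:gg6} on the left-hand side. Conversely, if you let $g$ range over $(H^1)^*$ so as to test the $H^1$-norm of $u$, then elliptic regularity gives only $v\in H^1$ near $\partial D$, hence $\partial v/\partial\nu\in H^{-1/2}(\partial D)$, and the boundary pairing forces $f$ back into $H^{1/2}(\partial D)$. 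The same index mismatch occurs in the Neumann case. This is not cosmetic: a duality argument of this type cannot produce an $H^1$ bound up to $\partial D$ from $H^{-1/2}$ Dirichlet data, because the solution operator does not gain three half-derivatives at the boundary. Note, though, that the only consequence of the lemma actually invoked in the proof of Theorem~\ref{thm:4} is the bound on $\|\widetilde u\|_{H^{1/2}(\partial\Omega)}$ (see \eqref{eq:g6}--\eqref{eq:g7}); since $\partial\Omega$ lies strictly in the interior of $\mathbb{R}^N\setminus\overline{D}$, that bound follows from the $L^2$ estimate by interior elliptic regularity. So your argument, once re-targeted at $\|u\|_{L^2}$, would still deliver what the application needs.
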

\begin{proof}
The proof follows from a completely similar argument to that for the proof of Theorem 2.2 in \cite{KP} by making use of the integral equation method, along with straightforward and necessary modifications by using the mapping properties of the involved potential operators in the Sobolev spaces in \cite{Mcl,Ned}.
\end{proof}

\section{Numerical experiments}

In this section, we conduct extensive numerical experiments to illustrate and verify the theoretical results in Sections 2 and 3. Specifically, the following three goals shall be achieved in our numerical study. 
\begin{itemize}
\item[i).] The existence and discreteness of interior transmission eigenvalues of \eqref{eq:ite} shall be numerically verified.
\item[ii).] The results in Theorems~\ref{thm:2} and \ref{thm:3} shall be numerically demonstrated and verified. 
\item[iii).] The results in Theorem~\ref{thm:4} shall be numerically demonstrated and verified. 
\end{itemize}
Different shapes of the scatterer such as circle, ellipse and square are considered. Throughout, we take $\sigma_c=1$ without loss of generality.

\subsection{Computation of the interior transmission eigenvalues}

Following our earlier discussion, we consider the following system of PDEs associated with the interior transmission eigenvalue problem \eqref{eq:ite},
\begin{equation}\label{eq:iten1}
\begin{cases}
\Delta v+\kappa^2 n_c v=0\quad & \mbox{in}\ \ \Omega\backslash\overline{D},\smallskip\\
\Delta w+\kappa^2 w=0\quad & \mbox{in}\ \ \Omega,\smallskip\\
v=0\quad & \mbox{on}\ \ \partial D,\smallskip\\
\displaystyle{v=w,\quad\frac{\partial v}{\partial \nu}=\frac{\partial w}{\partial\nu}} \quad & \mbox{on}\ \ \partial\Omega.
\end{cases}
\end{equation}
Here, we impose the homogeneous Dirichlet boundary condition on $\partial D$ for the subsequent discussion on the finite element discretisation of \eqref{eq:iten1},  and the homogeneous Neumann boundary condition can be treated similarly.
The weak formulation for (\ref{eq:iten1}) is read as follows for any $\phi\in H^1(\Omega)$,
\begin{eqnarray}\label{weak1}
&\int_{\partial\Omega \cup \partial D } \frac{\partial v}{\partial \nu}\phi\, ds-\int_{\Omega\backslash D}\nabla v\cdot \nabla \phi\, dV=-\int_{\Omega\backslash D} k^2n_c v\phi\, dV,\smallskip\\
& \int_{\partial\Omega}\frac{\partial w}{\partial \nu}\phi\, ds-\int_\Omega \nabla w\cdot\nabla \phi\, dV=-\int_\Omega k^2 w\phi\, dV. \label{weak2}
\end{eqnarray}
Subtracting (\ref{weak2}) from (\ref{weak1}) and using the boundary conditions in (\ref{eq:iten1}), we have
\begin{equation}\label{weak3}
\int_{\Omega\backslash D} \nabla v\cdot \nabla \phi\, dV-\int_{\Omega}\nabla w\cdot \nabla \phi\, dV-\int_{\partial D} \frac{\partial v}{\partial \nu}\phi\, ds=\int_{\Omega\backslash D} \kappa^2n_cv\phi\, dV-\int_{\Omega} \kappa^2 w \phi\, dV.
\end{equation}
For the numerical discretisation of \eqref{weak3}, we make use of the standard Lagrange finite elements. Define
\begin{eqnarray}
S_h&=&\mbox{the space of continuous piecewise $p$-degree finite elements on $\Omega$},\\
S_h^0&=&S_h\cap H_0^1(\Omega)\nonumber\\
&=&\mbox{the subspace of functions in $S_h$ with vanishing DoF on $\partial \Omega$},\\
S_h^B&=&\mbox{the subspace of functions in $S_h$ with vanishing DoF in $\Omega$},\\
S_h^D&=& S_h^0 \cap H_0^1(\Omega\backslash D).
\end{eqnarray}
where DoF stands for degrees of freedom. We explicitly enforce the Dirichlet boundary condition in (\ref{eq:ite}) by letting
\begin{eqnarray}
v_h&=&v_{0,h}+v_{B,h}, \quad \mbox{where } v_{0,h}\in S_h^D \mbox{ and } v_{B,h}\in S_h^B,\\
w_h&=&w_{0,h}+v_{B,h},  \quad \mbox{where } w_{0,h} \in S_h^0.
\end{eqnarray}
In (\ref{weak1}), by letting the test function $\xi_h\in S_h^D$, we obtain the standard weak formulation for $v_h$ as
\begin{equation}\label{dis1}
\int_{\Omega\backslash D}\nabla(v_{0,h}+v_{B,h})\nabla \xi_h dx=\int_{\Omega\backslash D} k^2n_c (v_{0,h}+v_{B,h})\xi_h\, dV
\end{equation}
for all $\xi_h\in S_h^D$.
Analogously, by letting the test function $\eta_h\in S_h^0$, we obtain the weak formulation for $w_h$ as
\begin{equation}\label{dis2}
\int_\Omega \nabla (w_{0,h}+v_{B,h})\nabla \eta_h\, dV=\int_\Omega k^2 (w_{0,h}+v_{B,h})\eta_h\, dV
\end{equation}
for all $\eta_h\in S_h^0$.
For (\ref{weak3}), by letting $\phi_h\in S_h^B$, we have
\begin{eqnarray}\label{dis3}
\int_{\Omega\backslash D}\nabla (v_{0,h}+v_{B,h})\nabla \phi_h\, dV-\int_{\Omega}\nabla (w_{0,h}+v_{B,h})\nabla \phi_h\, dV\nonumber\\
=\int_{\Omega\backslash D}k^2n_c(v_{0,h}+v_{B,h})\phi_h\, dV-\int_{\Omega}k^2(w_{0,h}+v_{B,h}) \phi_h\, dV.
\end{eqnarray}
Let $N_h,N_h^0,N_h^B$ and $N_h^D$ be the dimensions of $S_h$, $S_h^0$, $S_h^B$ and $S_h^D$, respectively. In addition, we choose $\{\xi_1,\cdots,\xi_{N_h}\}$ to be the finite element basis for $S_h$. We define the following matrices
\begin{center}
\begin{tabular}{l|l}
\hline
$S$ &  stiffness matrix, $(S)_{j,\ell}=\int_\Omega\nabla\xi_\ell\cdot \nabla\xi_{j}\, dV$\\
$S_D$ &  stiffness matrix, $(S)_{j,\ell}=\int_{\Omega\backslash D}\nabla\xi_\ell\cdot \nabla\xi_{j}\, dV$\\
$M_n$& mass matrices, $(M_n)_{j,\ell}=\int_\Omega n_c \xi_\ell \xi_{j}\, dV$\\
$M$& mass matrices, $(M)_{j,\ell}=\int_\Omega \xi_\ell\xi_{j}\, dV$\\
$M_{nD}$& mass matrices, $(M_{nD})_{j,\ell}=\int_{\Omega\backslash D}n_c \xi_\ell \xi_{j}\, dV$\\
\hline
\end{tabular}
\end{center}
Combining \eqref{dis1},\eqref{dis2} and \eqref{dis3}, the discrete counterpart associated with \eqref{eq:iten1} is to solve the following generalised eigenvalue problem
\begin{equation}
{\mathcal A}\vec{\mathbf{x}}=\kappa^2{\mathcal B}\vec{\mathbf{x}},
\end{equation}
where the matrices ${\mathcal A}$ and ${\mathcal B}$ are given block-wisely by
\[
{\mathcal A}=\left(\begin{array}{ccc}
S_D^{N_h^D\times N_h^D}&0&S_D^{N_h^D \times N_h^B} \\
0&S^{N_h^0\times N_h^0}&S^{N_h^0 \times N_h^B}\\
S_D^{N_h^B \times N_h^D}&-S^{N_h^B \times N_h^0}&S_D^{N_h^B \times N_h^B}-S^{N_h^B \times N_h^B}\end{array}\right),
\]
and
\[
{\mathcal B}=\left(\begin{array}{ccc}
M_{nD}^{N_h^D\times N_h^D}&0&M_{nD}^{N_h^D \times N_h^B} \\
0&M^{N_h^0\times N_h^0}&M^{N_h^0 \times N_h^B}\\
M_{nD}^{N_h^B \times N_h^D}&-M^{N_h^B \times N_h^0}&M_{nD}^{N_h^B \times N_h^B}-M^{N_h^B \times N_h^B}\end{array}\right).
\]

In all the numerical examples, we set $n_c=16$. Table \ref{table1} and Table \ref{table2} present the interior transmission eigenvalues for circles  ($\{\Omega: \|x\|<1\}$ and $\overline{\{D: \|x\|<0.5\}}$) with the homogeneous Dirichlet boundary condition on $\partial D$, and the results converge when we decrease the size of the mesh. In Table \ref{table2}, complex eigenvalues exist due to the non-selfadjointness of the interior eigenvalue problem \eqref{eq:iten1}. Table \ref{table3} gives the convergence test for circles with the homogeneous Neumann boundary condition imposed on $\partial D$.

Table \ref{table4} presents the interior transmission eigenvalues for ellipses ($\{\Omega: (x^1)^2+(\frac{x^2}{1.2})^2<1 \}$ and $\overline{\{D:(\frac{x^1}{0.5})^2+(\frac{x^2}{0.6})^2<1\}}$) with $h=0.1$, and the first line is the result for the Dirichlet boundary condition, while the second line
for the Neumann boundary condition.

Table \ref{table5} presents the interior transmission eigenvalues for squares ($\{\Omega: |x^1|<1\cap |x^2|<1 \}$ and $\overline{\{D: |x^1|<0.5\cap |x^2|<0.5\}}$) with $h=0.1$, and the first line is the result for the Dirichlet boundary condition, while the second line for the Neumann boundary condition.

 \begin{table}
 \begin{center}
\begin{tabular}{l|l|l|l|l|l}
\hline
$h=0.2$ & 0.352664 &0.353659 &0.519072& 0.518733 & 0.743794  \\
$h=0.1$ & 0.353965 &0.354349& 0.517122& 0.517444& 0.738215  \\
$h=0.05$& 0.353811 &0.353770& 0.516314& 0.516468&0.736280\\
\hline
\end{tabular}
\caption{Five interior transmission eigenvalues closest to 1 in the circle geometry with $n_c=16$ and the Dirichlet boundary condition. }
\label{table1}
\end{center}
 \end{table}

  \begin{table}
 \begin{center}
\begin{tabular}{l|l|l }
\hline
$h=0.2$ & 2.457778-0.466055i &  2.457778+0.466055i\\
$h=0.1$ & 2.402496-0.415131i &  2.402496+0.415131i \\
$h=0.05$& 2.387120-0.401892i &  2.387120+0.401892i\\
\hline
\end{tabular}
\caption{Complex interior transmission eigenvalues closest to 2.5 in the circle geometry with $n_c=16$ and the Dirichlet boundary condition.}
\label{table2}
\end{center}
 \end{table}

 \begin{table}
 \begin{center}
\begin{tabular}{l|l|l|l|l|l}
\hline
$h=0.2$ & 1.748573 &1.757866 &1.802944& 1.811505 & 1.991750  \\
$h=0.1$ & 1.671757 &1.675173& 1.721631& 1.723695& 1.890939  \\
$h=0.05$& 1.646361 &1.647434& 1.692928& 1.694515&1.842568\\
\hline
\end{tabular}
\caption{Five smallest interior transmission eigenvalues in the circle geometry with $n_c=16$ and the Neumann boundary condition. }
\label{table3}
\end{center}
 \end{table}

 \begin{table}
 \begin{center}
\begin{tabular}{l|l|l|l|l|l|l}
\hline
Dirichlet & 2.097681 &2.165191 &2.207713& 2.220829 & 2.225652-0.359758i& 2.225652+0.359758i \\
Neumann  & 1.483283 &1.533343& 1.594063& 1.597701 &1.747153& 1.751044  \\
\hline
\end{tabular}
\caption{Interior transmission eigenvalues in the ellipse geometry with $n_c=16$ and $h=0.1$: the first line lists the six eigenvalues closet to 2 with the Dirichlet boundary condition; the second line lists the smallest six eigenvalues for the Neumann boundary condition.}
\label{table4}
\end{center}
 \end{table}

  \begin{table}
 \begin{center}
\begin{tabular}{l|l|l|l|l|l|l}
\hline
Dirichlet & 1.800246-0.198428i &1.800246+0.198428i  &2.170438& 2.317611& 2.431338& 2.471902 \\
Neumann  & 0.761138 &1.192171& 1.649127& 1.678710 &1.679462& 1.736597  \\
\hline
\end{tabular}
\caption{Interior transmission eigenvalues in the square geometry with $n_c=16$ and $h=0.1$: the first line lists the six eigenvalues closet to 2 with the Dirichlet boundary condition; the second line lists the smallest six eigenvalues for the Neumann boundary condition.}
\label{table5}
\end{center}
 \end{table}


\subsection{Exterior scattering problem with the idealised boundary condition on $\partial D$}

In this section, we shall numerical demonstrate and verify Theorem~\ref{thm:2} with the idealised homogenous Dirichlet or Neumann condition on the boundary of the cloaked region $\partial D$. We first discuss the numerical strategy in determining the Herglotz approximation of the interior transmission eigenfunction $w$ in \eqref{eq:iten1} by $w_\kappa^g$ in \eqref{eq:app1}. In the sequel, we let $\Gamma'$ be a closed curve, and we require
\begin{equation}
w_\kappa^g|_{\Gamma'} \approx w|_{\Gamma'}.
\end{equation}
Assume $(\omega_i,\xi_i)$ is a quadrature rule for $\mathbb{S}^2$, and we choose the sampling points $x_1,\cdots,x_N$ on $\Gamma'$ such that
\begin{equation}\label{eq:ddd1}
\sum_i e^{i\kappa x_j\cdot \xi_i}g(\xi_i)\omega_i=w(x_j),\quad j=1,\cdots,N.
\end{equation}
\eqref{eq:ddd1} can be written as
\begin{equation}
\mathcal{A}g=W,
\end{equation}
which is ill-conditioned and we shall make use of the standard regularisation strategy by minimising the following Tikhonov functional
\begin{equation}\label{eq:ddd2}
||\mathcal{A}g-W||^2+r ||g||^2,
\end{equation}
where $r\in\mathbb{R}_+$ signifies a regulariser and should be properly chosen. In order to determine a minimiser of \eqref{eq:ddd2}, we solve the following normal equation
\begin{equation}
(r I +\mathcal{A}^*\mathcal{A})g=\mathcal{A}^*W.
\end{equation}
After the numerical determination of the Herglotz wave function $w^g_\kappa$, we use it as the incident wave $u^i$ to check if the following exterior problem has a small scattered wave filed,
\begin{equation}\label{eq:ddd3}
\begin{cases}
\displaystyle{\Delta u(x)+\kappa^2 n_c(x) u(x)=0}\qquad & \mbox{for}\ \ x\in\Omega\backslash\overline{D},\smallskip\\
\displaystyle{\Delta u(x)+\kappa^2 u(x)=0}\qquad & \mbox{for}\ \ x\in\mathbb{R}^N\backslash\overline{\Omega},\smallskip\\
\mathcal{B} u(x)=0\qquad & \text{for}\ \ x\in\partial D,\smallskip\\
\mbox{$u-u^i$ satisfies the Sommerfeld}&\hspace*{-2mm} \mbox{radiation condition}.
\end{cases}
\end{equation}

We make use the PML (perfectly matched layer) technique to reduce the unbounded problem \eqref{eq:ddd3} for the scatted wave $u^s=u-u^i$ to be a bounded-domain problem. The PML formulation takes the following form,
\begin{equation}\label{PML}
\frac{\partial}{\partial x^1}\Big(\frac{S_{x^2}}{S_{x^1}}u^s_{x^1}\Big)+\frac{\partial}{\partial x^2}\Big(\frac{S_{x^1}}{S_{x^2}}u^s_{x^2}\Big)+\kappa^2nS_{x^1}S_{x^2}u^s=-\kappa^2(n-1)u^i,
\end{equation}
where
\begin{equation}
S_{x^1}=1+\frac{\sigma_{x^1}}{\mathrm{i}\kappa},\quad S_{x^2}=1+\frac{\sigma_{x^2}}{\mathrm{i}\kappa},
\end{equation}
and $\sigma_{x^1}$ and $\sigma_{x^2}$ are, respectively, functions of $x^1, x^2$ only. In our numerical experiments, we choose
\begin{equation}
\sigma_{x^1}=(l/d)^m\sigma_{x,max},
\end{equation}
where $l$ is the distance from the interface between the PML and the physical solution domain, $d$ is the thickness of the PML, and
\begin{equation}
\sigma_{x^1,max}=-\frac{(m+1)\ln(R(0))}{2d},
\end{equation}
with $R(0)$ taken to be $e^{-16}$ and $m=3$.
The definition of $\sigma_{x^2}$ is similar. We define the scattering ratio as $\frac{||u^s||_{L^2(\Gamma)}}{||u^i||_{L^2(\Gamma)}}$ with $\Gamma$ being a closed curve chosen to be a circle of radius 1.8. In all the subsequent figures, we use the mesh with  $h=0.1$ and $n_c=16$.

Figure \ref{Fig1} presents the distributions of the scattered field, total field and incident field in the circle geometry when $\kappa=0.354349$ with the Dirichlet boundary condition. The scattering ratio is 0.032857. Figure \ref{Fig2} presents the fields in the circle geometry when $\kappa=3.028932$ with the Dirichlet boundary condition, and the scattering ratio is 0.014606. Figure \ref{Fig3} is the result for $\kappa=3.857263$ with the Dirichlet boundary condition and the scattering ratio is 0.061382. We also give a result in the circle geometry with the Neumann boundary condition in Figure \ref{Fig4}, where $\kappa=1.890939$  and the scattering ratio 0.032492. Clearly, the near-invisibility is achieved for all of the cases and this verifies Theorems~\ref{thm:2} and \ref{thm:3}. 

\begin{figure}[!ht]
\centerline{
   \includegraphics[height=5cm,width=12cm,angle=0]{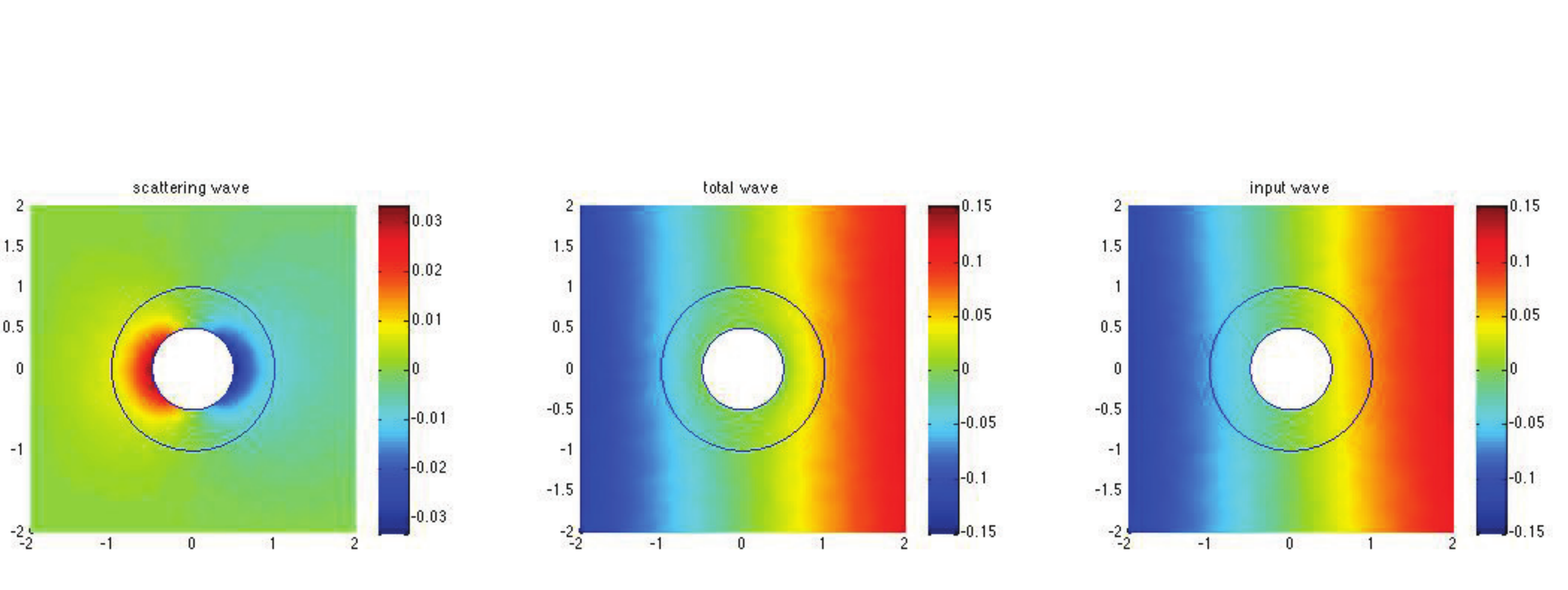}
  }
\caption{Field distributions in the circle geometry when $\kappa=0.354349$ with the Dirichlet boundary condition.}
\label{Fig1}
\end{figure}

\begin{figure}[!ht]
\centerline{
 \includegraphics[height=5cm,width=12cm,angle=0]{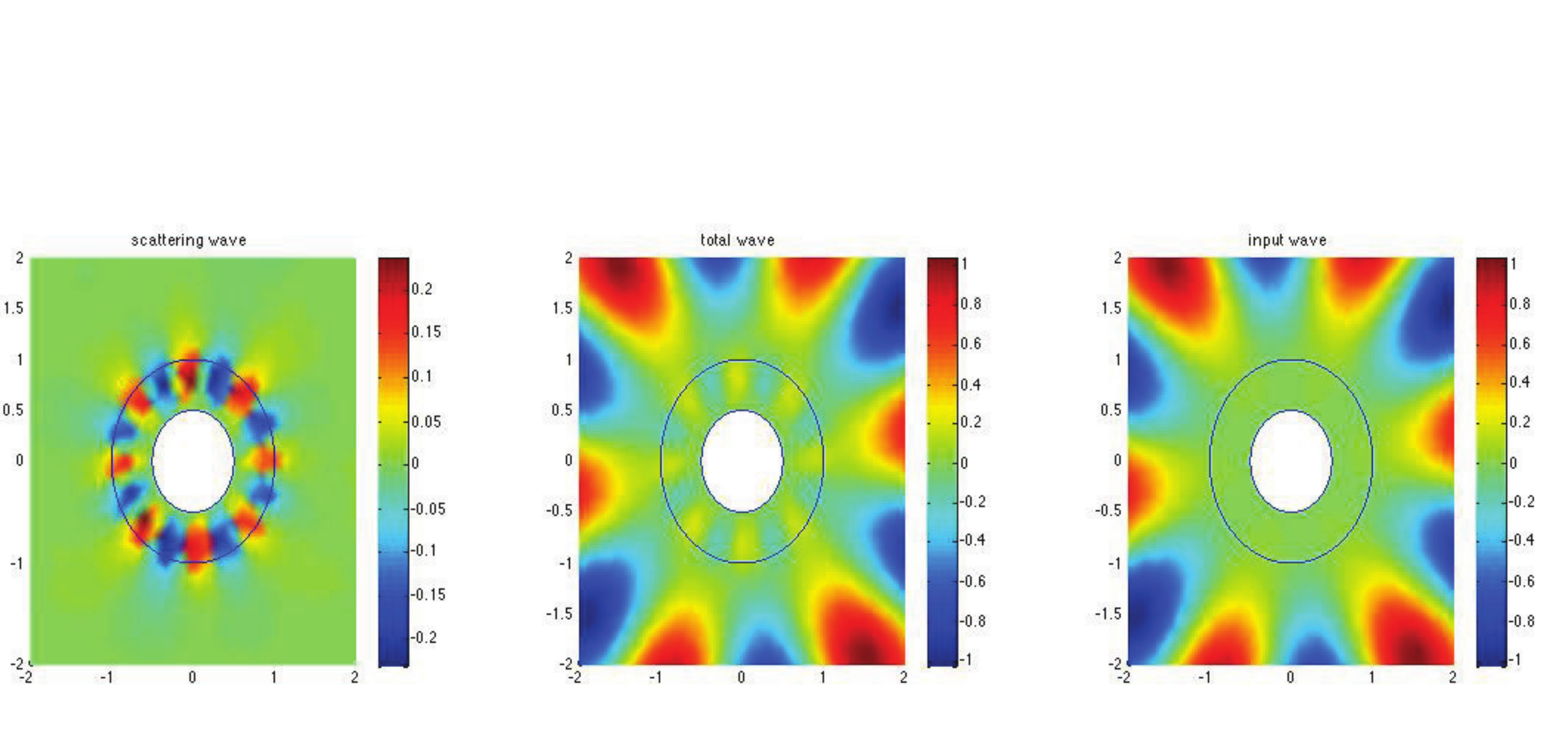}
  }
\caption{Field distributions in the circle geometry when $\kappa=3.028932$ with the Dirichlet boundary condition.}
\label{Fig2}
\end{figure}

\begin{figure}[!ht]
\centerline{
 \includegraphics[height=5cm,width=12cm,angle=0]{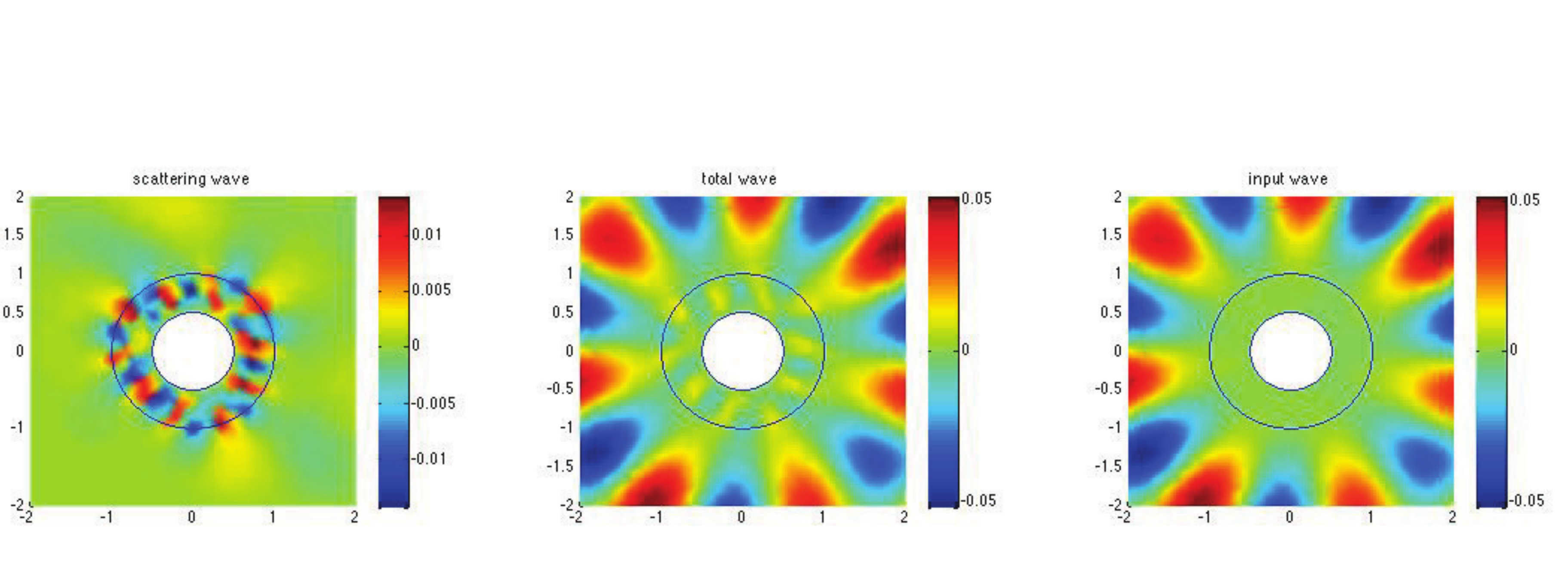}
  }
\caption{Field distributions in the circle geometry when $\kappa=3.857263$ with the Dirichlet boundary condition.}
\label{Fig3}
\end{figure}

\begin{figure}[!ht]
\centerline{
 \includegraphics[height=5cm,width=12cm,angle=0]{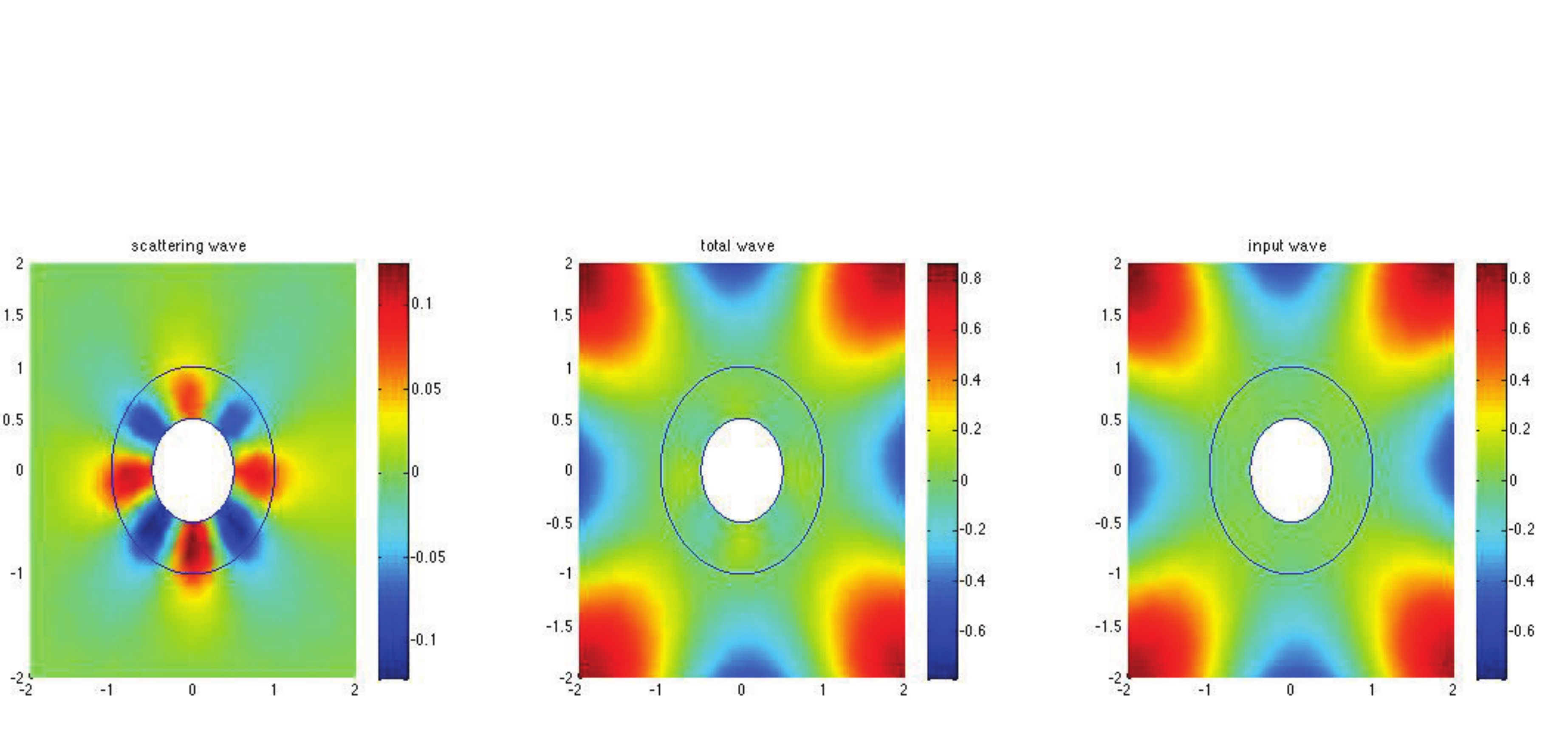}
  }
\caption{Field distributions in the circle geometry when $\kappa=1.890939$ with the Neumann boundary condition.}
\label{Fig4}
\end{figure}

Next, we will consider the examples for the ellipse geometry. Figure \ref{Fig5} gives the distributions of the scattered field, total field and incident field of the ellipse geometry when $\kappa=2.097681$ with the Dirichlet boundary condition, and the scattering ratio is 0.074928.  Figure \ref{Fig6} gives the results when $\kappa=1.747153$ with the Neumann boundary condition, and the scattering ratio is 0.061357. The near-invisibility is also achieved for all both of the computed cases. 

\begin{figure}[!ht]
\centerline{
 \includegraphics[height=5cm,width=12cm,angle=0]{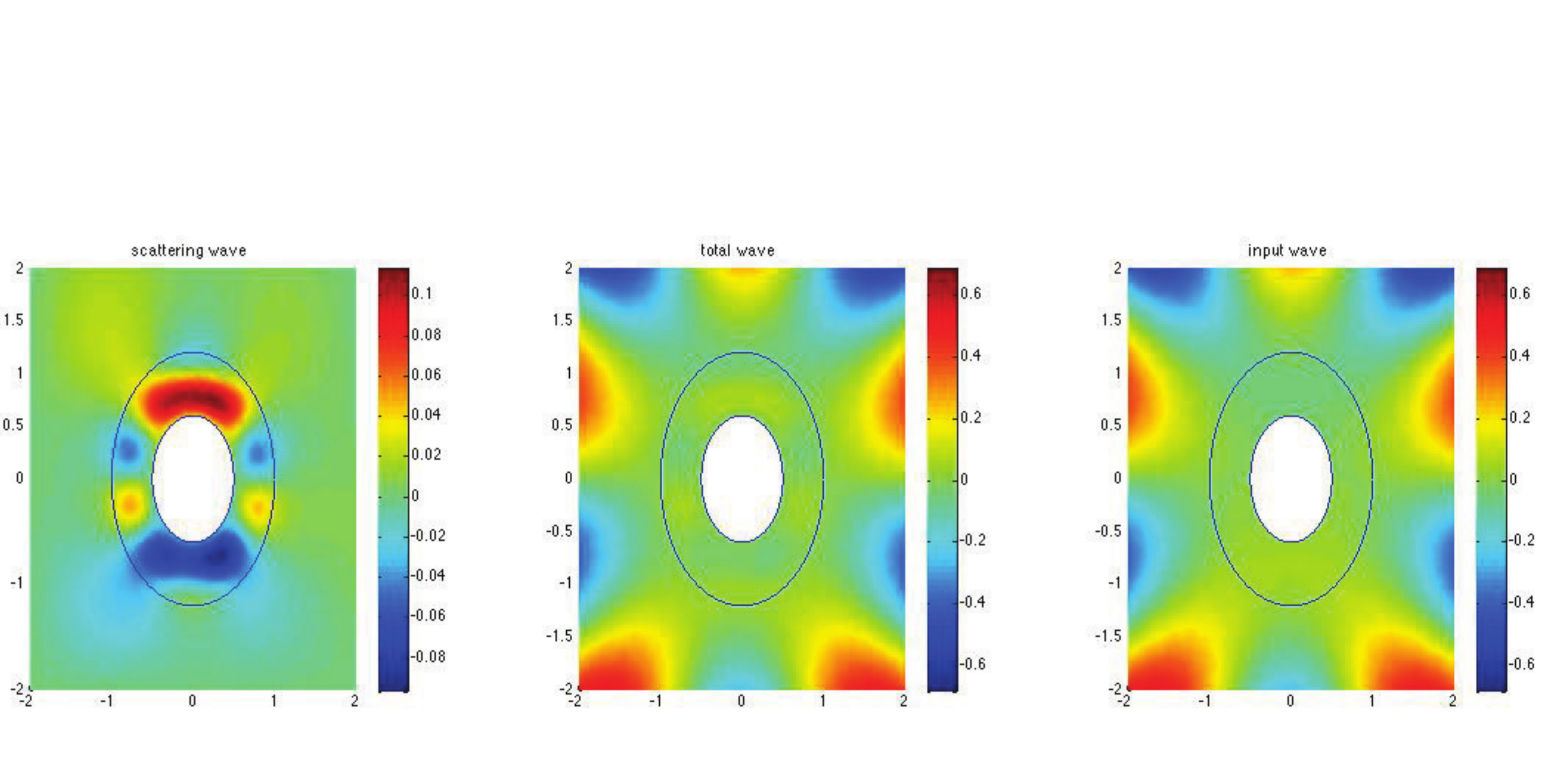}
  }
\caption{Field distributions in the ellipse geometry when $\kappa=2.097681$ with the Dirichlet boundary condition.}
\label{Fig5}
\end{figure}

\begin{figure}[!ht]
\centerline{
 \includegraphics[height=5cm,width=12cm,angle=0]{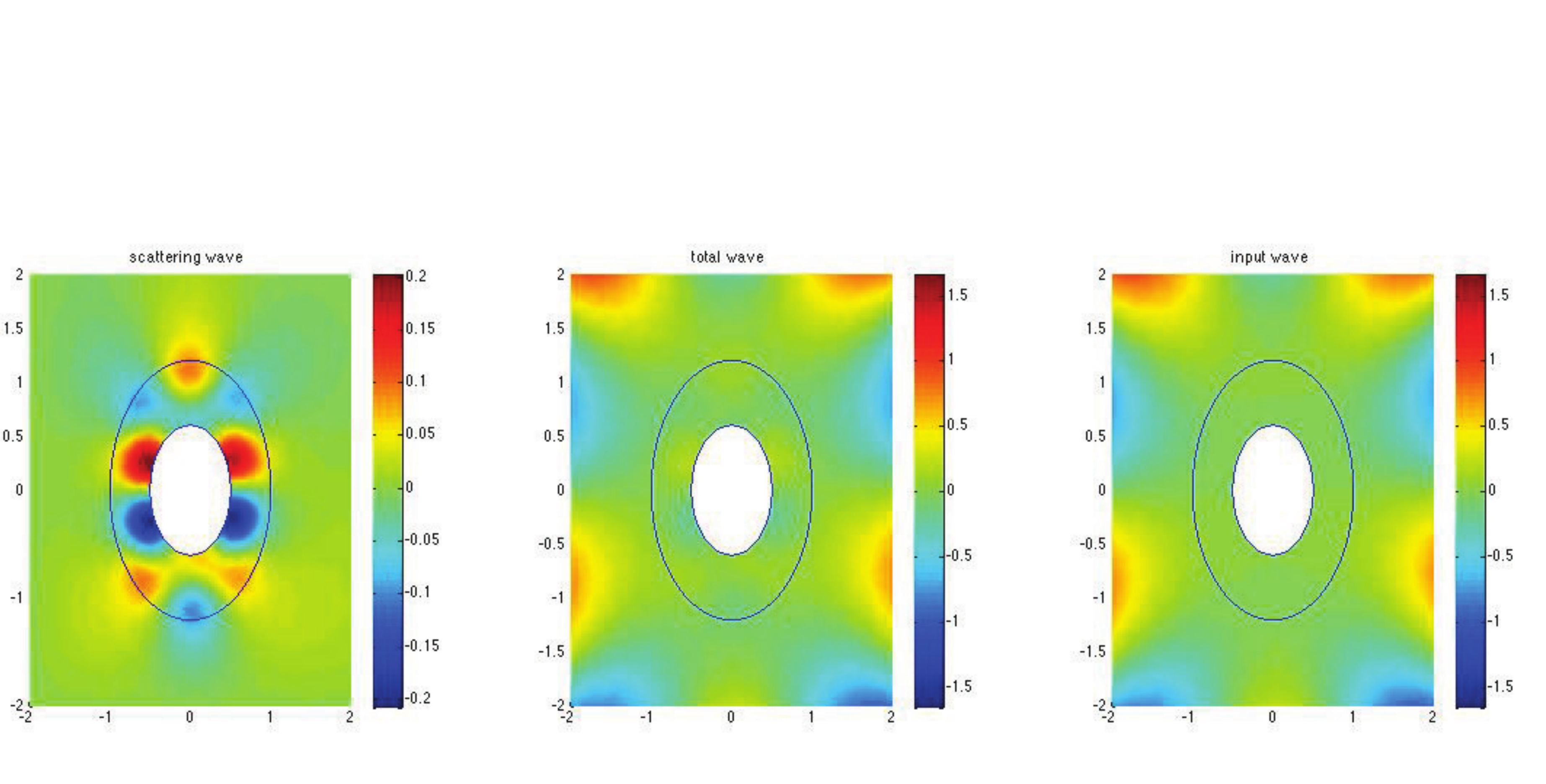}
  }
\caption{Field distributions in the ellipse geometry when $\kappa=1.747153$ with the Neumann boundary condition.}
\label{Fig6}
\end{figure}

Next, we consider the square geometry. Figure \ref{Fig7} presents the distributions of the scattered field, total field and incident field in the square geometry when $\kappa=2.431338$ with the Dirichlet boundary condition, and the scattering ratio is 0.816597. Figure \ref{Fig8} presents the corresponding results when $\kappa=0.761138$ with the Neumann boundary condition, and the scattering ratio is 1.136312. Clearly, near-invisibility cannot be achieved. We believe that this is mainly due to that the non-transparency condition is not fulfilled in those cases; see our remarks after Theorem~2.3 in Section 2. 

\begin{figure}[!ht]
\centerline{
 \includegraphics[height=5cm,width=12cm,angle=0]{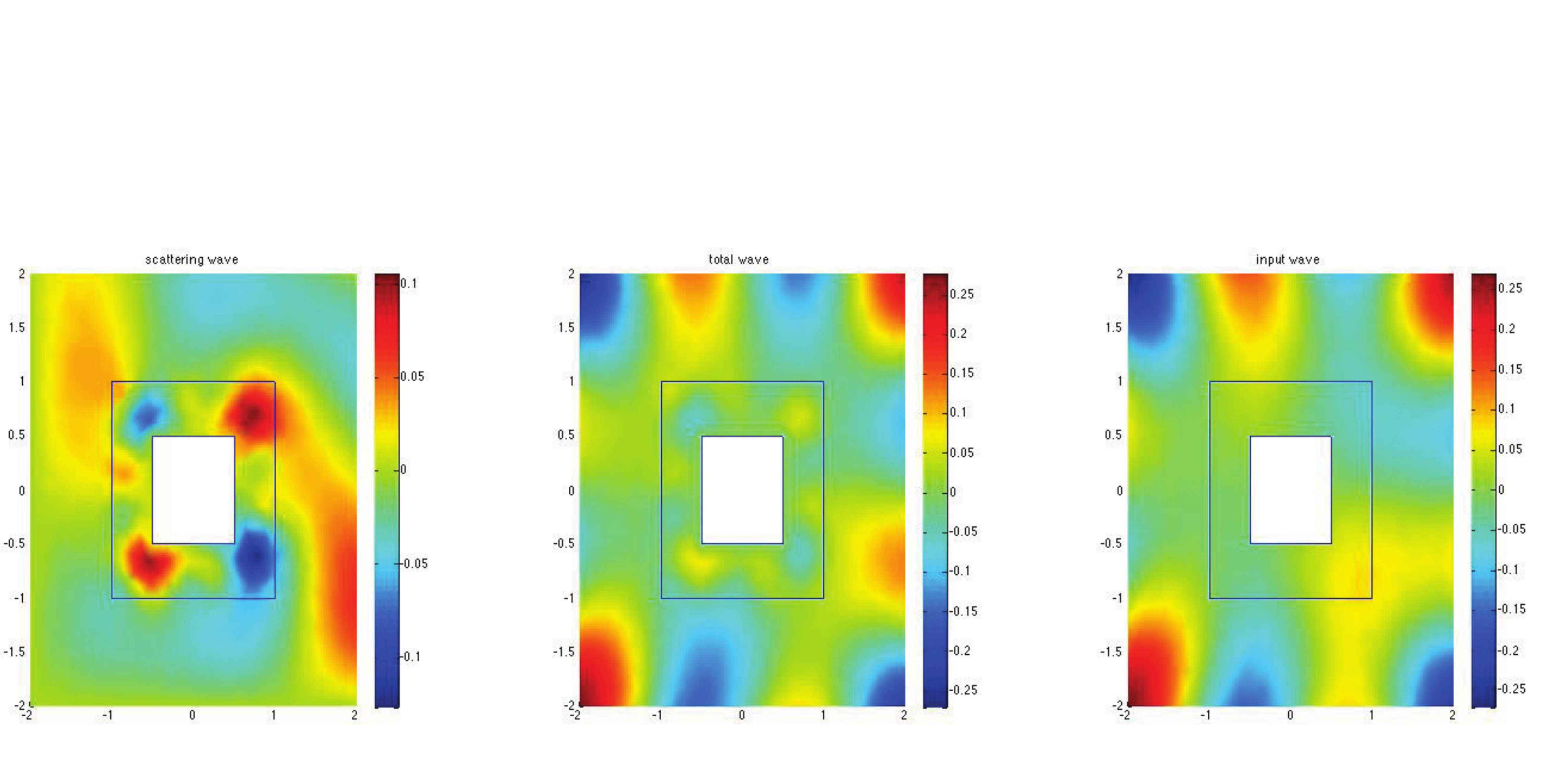}
  }
\caption{Field distributions in the square geometry when $\kappa=2.431338$ with the Dirichlet boundary condition.}
\label{Fig7}
\end{figure}

\begin{figure}[!ht]
\centerline{
 \includegraphics[height=5cm,width=12cm,angle=0]{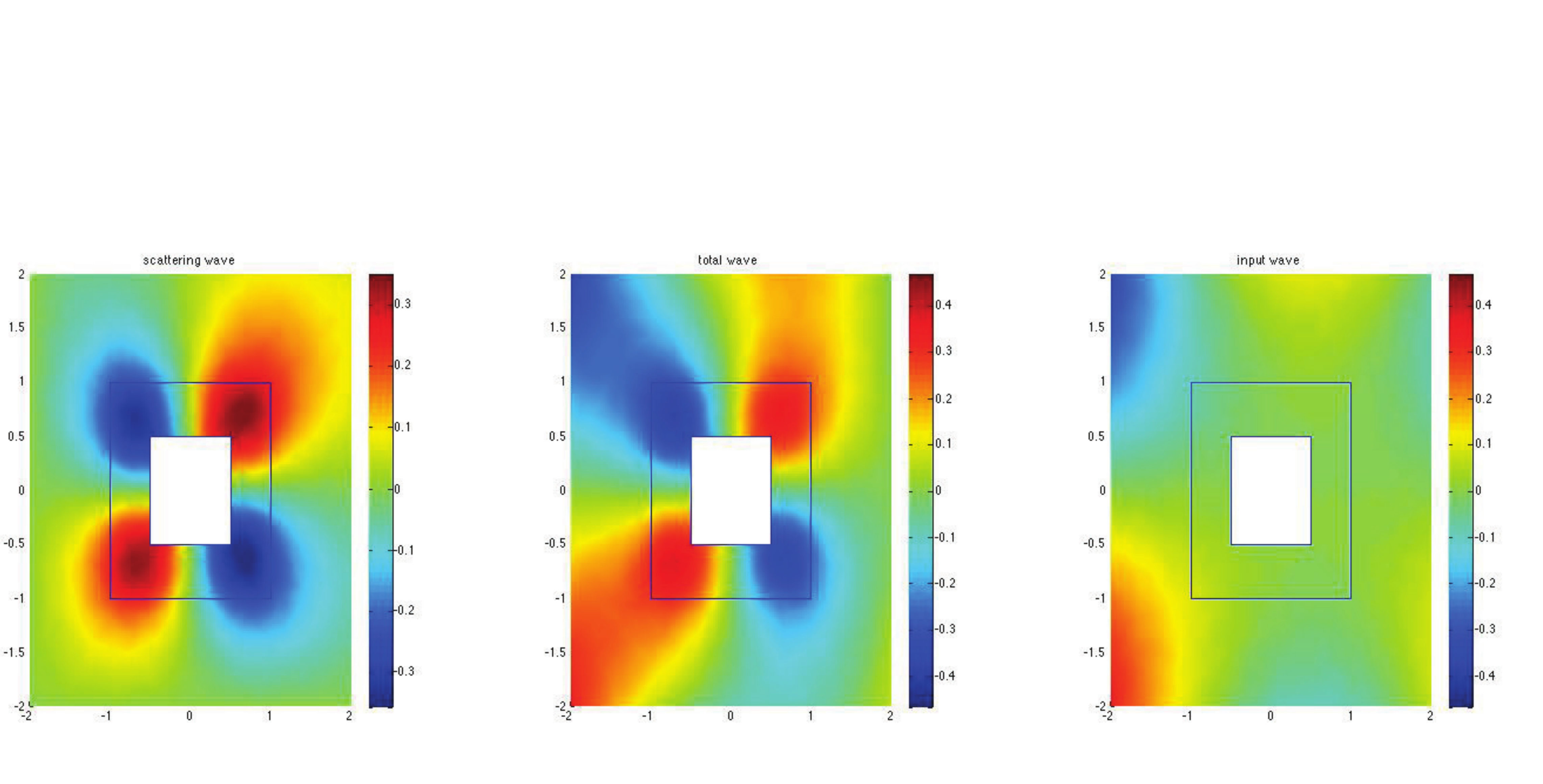}
  }
\caption{Field distributions in the square geometry when $\kappa=0.761138$ with the Neumann boundary condition.}
\label{Fig8}
\end{figure}

\subsection{Numerical results for isotropic invisibility cloaking}

In this section, we use the the isotropic lossy layers as in \eqref{eq:lossy1} and \eqref{eq:lossy2} for the finite realisation of the idealised boundary condition on $\partial D$. We define $\Sigma := \{\|x\|<0.3\} $ in the case of a circle and $\Sigma := \{ (\frac{x^1}{0.3})^2+(\frac{x^2}{0.36})^2<1\}$ for an ellipse; and $\sigma_a=I$ and $n_a=12$ in $\Sigma$. We also set $\gamma=1,\tau=0.01,\alpha=1,\beta=0.3$.

Figure \ref{Fig9} presents the distributions of the scattered field, total field and incident field in the circle geometry when $\kappa=3.857263$ with the setting \eqref{eq:lossy1}, and the scattering ratio is 0.100874. Figure \ref{Fig10} gives the field distributions when $\kappa=1.890939$ with the setting \eqref{eq:lossy2}, and the scattering ratio is 0.024135.

Figure \ref{Fig11} presents the distributions of the scattered field, total field and incident field in the ellipse geometry when $\kappa=2.097681$ with the setting \eqref{eq:lossy1}, and the scattering ratio is 0.247339. Figure \ref{Fig11} presents the distributions of the fields in the ellipse geometry when $\kappa=1.747153$ with the setting \eqref{eq:lossy2}, and the scattering ratio is 0.068411.

\begin{figure}[!ht]
\centerline{
 \includegraphics[height=5cm,width=12cm,angle=0]{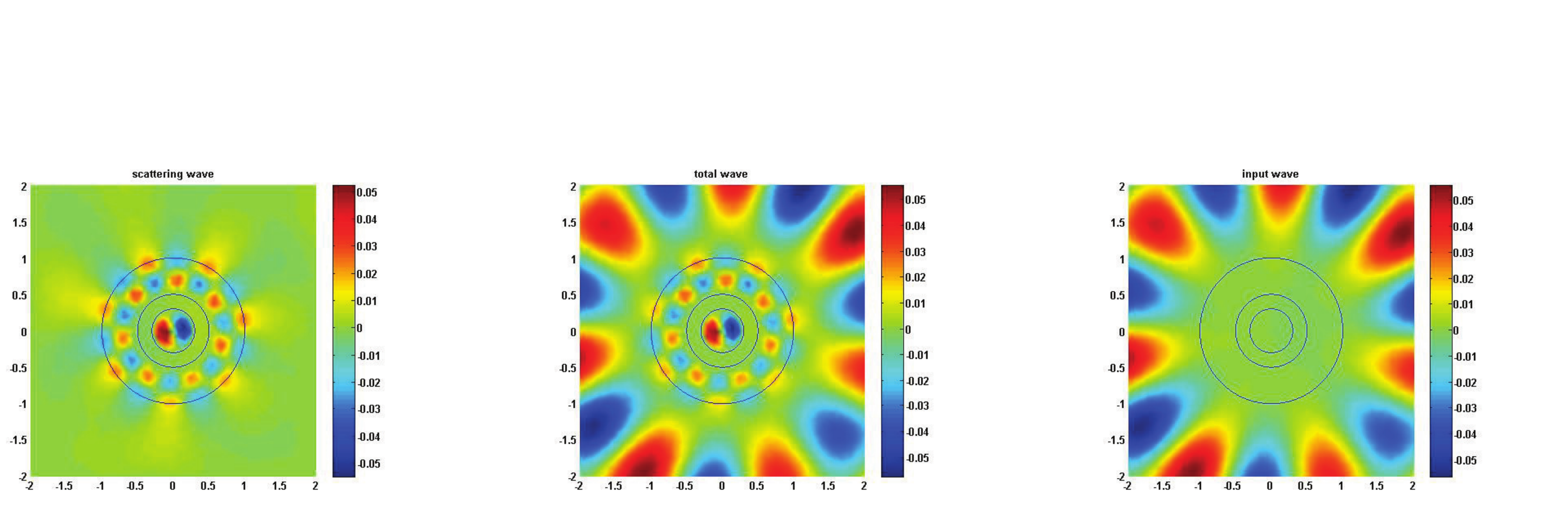}
  }
\caption{Field distributions in the circle geometry when $\kappa=3.857263$ with the setting \eqref{eq:lossy1}.}
\label{Fig9}
\end{figure}

\begin{figure}[!ht]
\centerline{
 \includegraphics[height=5cm,width=12cm,angle=0]{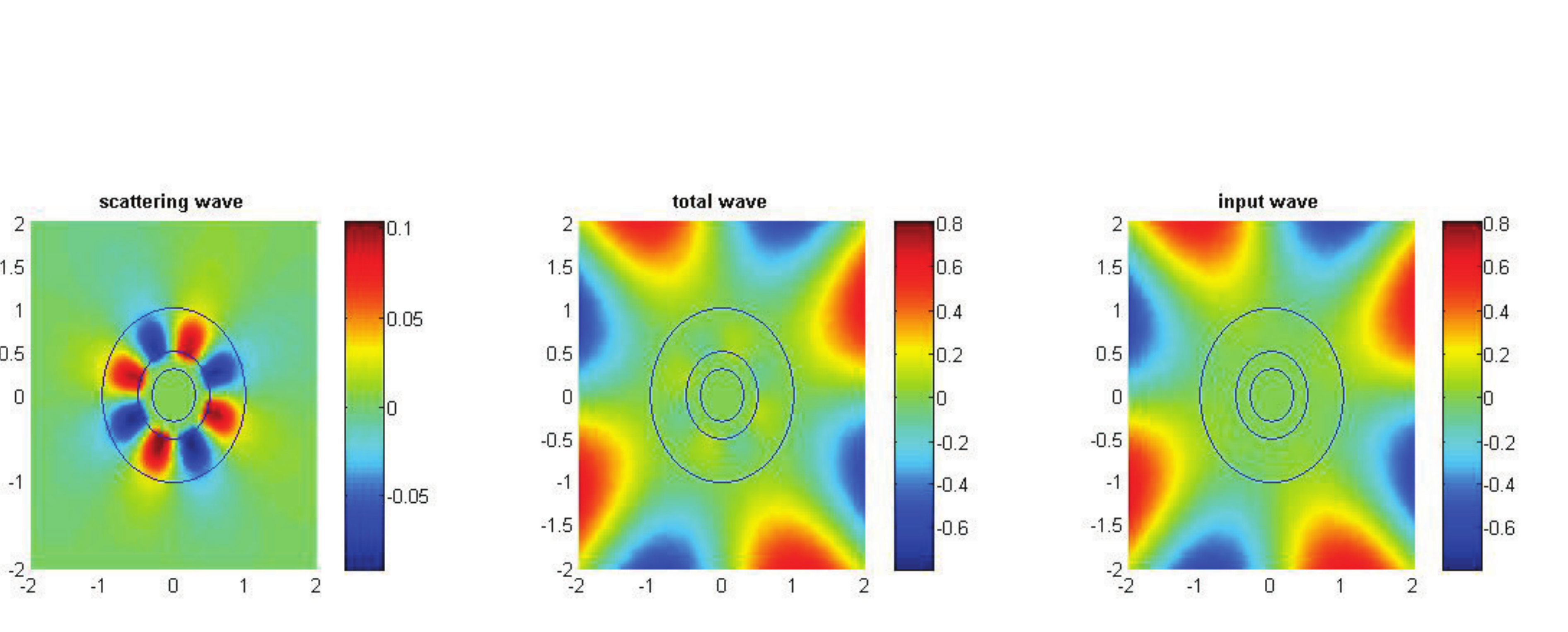}
  }
\caption{Field distributions in the circle geometry when $\kappa=1.890939$ with the setting \eqref{eq:lossy2}.}
\label{Fig10}
\end{figure}

\begin{figure}[!ht]
\centerline{
 \includegraphics[height=5cm,width=12cm,angle=0]{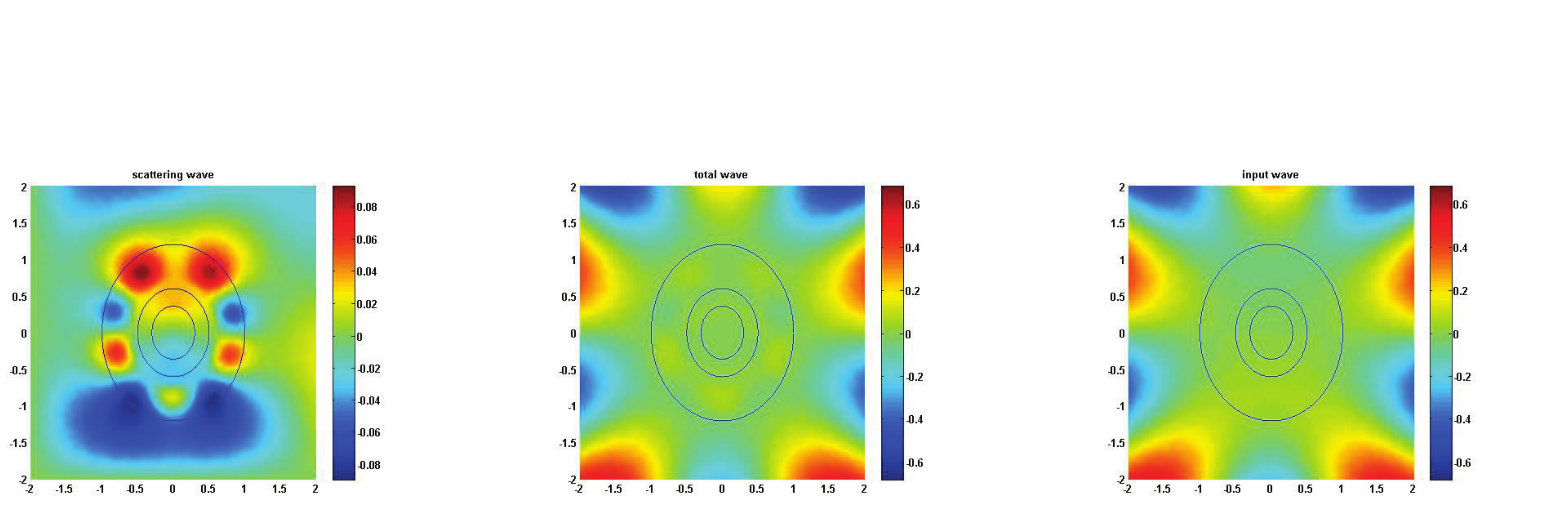}
  }
\caption{Field distributions in the ellipse geometry when $\kappa=2.097681$ with the setting \eqref{eq:lossy1}.}
\label{Fig11}
\end{figure}

\begin{figure}[!ht]
\centerline{
 \includegraphics[height=5cm,width=12cm,angle=0]{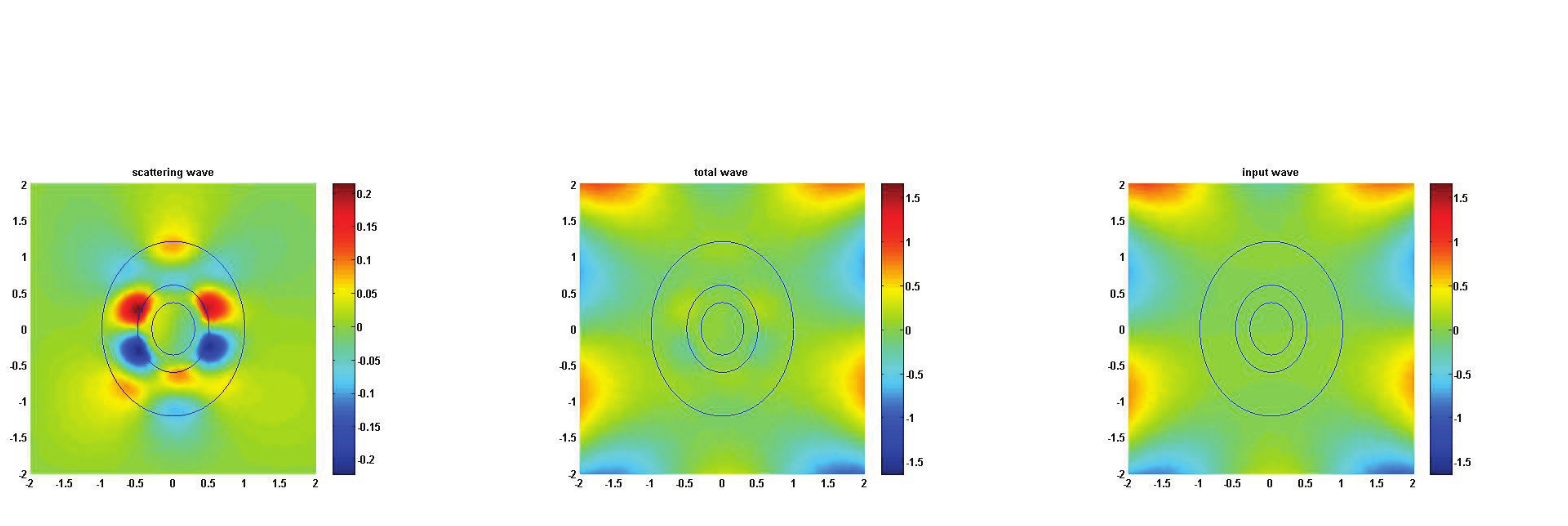}
  }
\caption{Field distributions in the ellipse geometry when $\kappa=1.747153$ with the setting \eqref{eq:lossy2}.}
\label{Fig12}
\end{figure}

\section*{Acknowledgement}

The work of H. Liu was supported by the FRG grants from Hong Kong Baptist University, Hong Kong RGC General Research Funds, 12302415 and 405513, and the NSFC grant under No. 11371115. X. Ji was supported by the National Natural Science Foundation of China (No. 11271018, No. 91230203) and the Special Funds for National Basic Research Program of China, 973 Program 2012CB025904.

\end{document}